\newcommand{\Q}{{\mathbb Q}}
\newcommand{\C}{{\mathbb C}}
\newcommand{\Z}{{\mathbb Z}}
\newcommand{\F}{{\mathbb F}}
\newcommand{\N}{{\mathbb N}}
\newcommand{\R}{{\mathbb R}}
\newcommand{\OO}{{\mathcal O}}
\newcommand{\p}{{\mathfrak p}}
\renewcommand{\>}{\rangle}
\newcommand{\rank}{\mathrm{rank}}
\newcommand{\ord}{\mathrm{ord}}
\newcommand{\Gal}{\mathrm{Gal}}
\newcommand{\cyc}{\mathrm{cyc}}
\newcommand{\Pic}{\mathrm{Pic}}
\renewcommand{\ker}{\mathrm{ker}}
\newcommand{\im}{\mathrm{im}}
\newcommand{\Br}{\mathrm{Br}}
\newcommand{\Spec}{\mathrm{Spec}}
\newcommand{\Hom}{\mathrm{Hom}} 
\title[Generalization of Ferrero's Computations]{An Alternative Proof and Generalization of Ferrero's Computations of Iwasawa $\lambda$-Invariants}
\author{Jordan Schettler}
\begin{document}
\newtheorem{thm}{Theorem}
\newtheorem{conj}[thm]{Conjecture}
\newtheorem{prop}[thm]{Proposition}
\newtheorem{lemma}[thm]{Lemma}
\newtheorem{corollary}[thm]{Corollary}
\theoremstyle{remark}
\newtheorem{rem}[thm]{Remark}
\theoremstyle{definition}
\newtheorem{defn}[thm]{Definition}
\newtheorem{exam}[thm]{Example}
\numberwithin{equation}{section}
\numberwithin{equation}{thm}

\begin{abstract}
We prove a slight generalization of Iwasawa's `Riemann-Hurwitz' formula for number fields and use it to generalize Ferrero's and Kida's well-known computations of Iwasawa $\lambda$-invariants for the cyclotomic $\Z_2$-extensions of imaginary quadratic number fields. In particular, we show that if $p$ is a Fermat prime, then similar computations of Iwasawa $\lambda$-invariants hold for certain imaginary quadratic extensions of the unique subfield $k\subset \mathbb{Q}(\zeta_{p^2})$ such that $[k:\mathbb{Q}]=p$. In fact, we actually prove more by explicitly computing cohomology groups of principal ideals. The computation of lambda invariants obtained is a special case of a much more general result concerning relative lambda invariants for cyclotomic $\Z_2$-extensions of CM number fields due to Y\^{u}ji Kida. However, the approach used here significantly differs from that of Kida, and the intermediate computations of cohomology groups found here do not hold in Kida's more general setting.
\end{abstract}

\maketitle

%
%

\section{Introduction}

Fix a prime $p$ and number field $k$. Suppose $k_{\infty}$ is a $\Z_p$-extension of $k$, i.e., $k_{\infty}$ is a Galois extension of $k$ with Galois group $\Gal(k_{\infty}/k)$ isomorphic to the group $\Z_p$ of $p$-adic integers. The subfields of $k_{\infty}$ which contain $k$ all lie in a tower
\begin{align*}
k = k_0 \subset k_1 \subset k_2 \subset \ldots \subset k_{\infty}
\end{align*}
with $\Gal(k_n/k) \cong \Z/(p^n)$ for all integers $n \geq 0$. Kenkichi Iwasawa's well-known growth formula (see \cite{Iwas5} or \cite{Iwas4}) states that there are integers $\lambda, \mu, \nu$ such that if $A_n$ denotes the $p$-primary part of the class group of $k_n$, then
\begin{align*}
|A_n| = p^{\lambda \cdot n+\mu \cdot p^n + \nu}
\end{align*}
for all sufficiently large integers $n$. In particular, $\lambda,\mu \geq 0$ but we can have $\nu < 0$. We call $\lambda$, $\mu$, $\nu$ the Iwasawa invariants of the extension $k_{\infty}/k$. There is a special case in which all the invariants are known to vanish.
\begin{thm}[Iwasawa, \cite{Iwas5}]\label{vanish}
Let $k$ be a number field having exactly one prime $\p$ lying over a rational prime $p$, and let $k_{\infty}$ be a $\Z_p$-extension of $k$. Suppose $p$ does not divide the class number of $k$. Then for every number field $k_n \subset k_{\infty}$ which contains $k$, the prime $\p$ ramifies totally in $k_n/k$ and $p$ does not divide the class number of $k_n$; in particular, all of the Iwasawa invariants for $k_{\infty}/k$ are zero, i.e., $\lambda = \mu = \nu = 0$.
\end{thm}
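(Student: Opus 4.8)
The plan is to prove the two structural claims—total ramification of $\p$ and non-divisibility of the class number by $p$—simultaneously, since each level depends on the previous. Let me think about the key tools and the order of the argument.

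First, total ramification. Since $k_\infty/k$ is a $\Z_p$-extension, it is unramified outside the primes over $p$ (a standard fact: $\Z_p$ has no finite quotients of order prime to $p$, and ramification in a $\Z_p$-extension must be wild). Because $p$ must ramify somewhere (otherwise $k_\infty/k$ would be an everywhere-unramified $\Z_p$-extension, impossible since such extensions are finite by finiteness of the class group, or more directly contradict the nontriviality of the tower), and $\p$ is the only prime over $p$, the prime $\p$ must ramify. The inertia subgroup $I \subseteq \Gal(k_\infty/k) \cong \Z_p$ is then a nontrivial closed subgroup, hence $I = p^m\Z_p$ for some $m$, so $\p$ is totally ramified in $k_\infty/k_m$. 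The cleanest approach is to argue that in fact $m=0$, i.e. $\p$ is totally ramified all the way down, using the hypothesis $p \nmid h_k$.

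Second, the class number claim, which I expect to be the main obstacle and the crux of the whole theorem. The standard technique is to use the norm map on class groups together with the ramification. When $\p$ is totally ramified in $k_n/k$, one has that $\Gal(k_n/k)$ fixes $\p$ and the norm map $N\colon A_n \to A_0$ is surjective; dually, the natural map $A_0 \to A_n$ has image generated by the class of $\p$, which is trivial in $A_0$ up to $p$-power order. The key input is Iwasawa's analysis via the theory of the module $X = \varprojlim A_n$ over the Iwasawa algebra $\Lambda = \Z_p[[T]]$, or more elementarily, genus theory / Chevalley's ambiguous class number formula. I would invoke the ambiguous class number formula: for the cyclic extension $k_n/k$ with group $G_n$, the order of $A_n^{G_n}$ is controlled by $|A_0|$, the ramification, and a unit index term; total ramification at the single prime $\p$ over $p$ forces the ramification contribution to be trivial (one ramified prime, fully ramified, contributes a factor that cancels), so $|A_n^{G_n}| = |A_0|/(\text{unit index})$, which divides $|A_0|$. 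Since $p \nmid |A_0| = (h_k)_p$, we get $A_n^{G_n} = 0$, and because $A_n$ is a finite $p$-group on which the $p$-group $G_n$ acts, a trivial fixed-point set forces $A_n = 0$.

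The cleanest self-contained route is the induction: assume $p \nmid h_{k_{n-1}}$ and $\p$ totally ramified in $k_{n-1}/k$; then show $\p$ is totally ramified in $k_n/k_{n-1}$ (it must be, being the unique prime over $p$ in $k_{n-1}$ and $k_n/k_{n-1}$ ramified) and that $A_n^{G}=0$ where $G = \Gal(k_n/k_{n-1}) \cong \Z/p$. The essential point I must verify carefully is that with exactly one totally ramified prime and no other ramification, the ambiguous class number formula gives $|A_n^{G}| = |A_{n-1}|\cdot e/[k_n:k_{n-1}]$ up to the unit-index factor, where the ramification index $e = p$ exactly cancels the degree, leaving $|A_n^G| \mid |A_{n-1}|$; then $p\nmid h_{k_{n-1}}$ yields $A_n^G = 0$ and hence $A_n = 0$ via the fixed-point lemma for $p$-groups acting on finite $p$-groups.

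The hard part will be pinning down the unit-index term in the ambiguous class number formula and confirming it introduces no factor of $p$ obstructing the vanishing—this requires checking that the local and global unit contributions are compatible with the single ramified prime—but given the severe constraint of a unique prime over $p$, I expect this term to be harmless, and the whole argument reduces to the standard cohomological fixed-point vanishing once the ramification bookkeeping is done.
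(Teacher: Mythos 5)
The paper does not actually prove Theorem \ref{vanish}; it quotes it from Iwasawa \cite{Iwas5}, so the comparison here is with the classical argument that citation points to. Your proposal is correct in all essentials and takes the dual route: the classical proof (as in Iwasawa and in Washington's book) works with \emph{coinvariants} --- one shows via the $p$-Hilbert class field $L_n$ of $k_n$ that $\Gal(L_n/k)$ is generated by the inertia group of $\p$ together with $X_n = \Gal(L_n/k_n) \cong A_n$, deduces from $p \nmid h_k$ that $A_n = (\sigma-1)A_n$, and concludes $A_n = 0$ by Nakayama --- whereas you work with \emph{invariants}, getting $A_n^{G} = 0$ from Chevalley's ambiguous class number formula and then $A_n = 0$ from the fixed-point lemma for $p$-groups acting on finite $p$-groups. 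Both proofs hinge on the identical cancellation: the unique prime above $p$, totally ramified, contributes $\prod_v e_v = [k_n : k_{n-1}]$, so the ramification term exactly cancels the degree. Your route trades class field theory inside $L_n$ for genus theory; it buys a more computational, self-contained induction at the cost of invoking Chevalley's formula as a black box.

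Two small points to tighten. First, the ``hard part'' you flag --- the unit-index term --- is a non-issue: in Chevalley's formula the index $[E_{k_{n-1}} : E_{k_{n-1}} \cap N(k_n^{\times})]$ sits in the \emph{denominator}, so it can only shrink $|A_n^G|$, never obstruct the vanishing; once $\prod_v e_v$ cancels $[k_n:k_{n-1}]$ you get $|A_n^G|$ dividing the $p$-part of $h_{k_{n-1}}$, full stop. Second, in the induction step you assert that $k_n/k_{n-1}$ is ramified without proof; the justification is the same finiteness argument you sketched at the outset, and it is exactly where the deferred $m=0$ claim gets discharged: if $k_n/k_{n-1}$ were unramified at the unique prime above $p$, it would be unramified everywhere (no other finite prime can ramify in a $\Z_p$-extension, and archimedean inertia is trivial since $\Z_p$ is torsion-free), hence contained in the Hilbert class field of $k_{n-1}$, contradicting the induction hypothesis $p \nmid h_{k_{n-1}}$. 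With that made explicit, your induction simultaneously establishes total ramification and the vanishing of $A_n$, and the conclusion $\lambda = \mu = \nu = 0$ follows trivially from $|A_n| = 1$ for all $n$.
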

For every prime $p$ and number field $k$, there is at least one $\Z_p$-extension of $k$; namely, there is a unique $\Z_p$-extension of $k$ contained in  $\bigcup_{n \geq 0} k(\zeta_{p^n})$ where the $\zeta_{p^n}$ are primitive $p^n$th roots of unity. We denote this $\Z_p$-extension by $k_{\infty}^{\cyc}$ and call it the cyclotomic $\Z_p$-extension of $k$. We write $\lambda_p(k)$, $\mu_p(k)$, $\nu_p(k)$ for the Iwasawa invariants of the extension $k^{\cyc}_{\infty}/k$. Conjecturally, the only $\Z_p$-extension of a totally real number field is the cyclotomic one, and this is known for $\Q$ and real quadratic number fields, for example.

Iwasawa conjectured that $\mu_p(k)=0$ for all primes $p$ and number fields $k$, and no counterexamples are known.\footnote{However, there are non-cyclotomic $\Z_p$-extensions of number fields which have an Iwasawa invariant $\mu > 0$.} This conjecture has been verified for abelian number fields by Bruce Ferrero and Lawrence Washington (\cite{Ferr2}) and for $p$-extensions of number fields $k$ with $\mu_p(k)=0$ by Iwasawa (\cite{Iwas6}). We make the assumption $\mu_p(k)=0$ throughout the paper.

Bruce Ferrero and Y\^{u}ji Kida independently calculated $\lambda_2(k)$ for imaginary quadratic fields $k$. Their computations are explicit:
\begin{thm}[Ferrero, \cite{Ferr}; Kida, \cite{Kida2}]\label{Fer}
Let $d > 2$ be a squarefree integer. Then
\begin{align}\label{kidaspecial}
\lambda_2(\Q(\sqrt{-d})) = -1 + \mathop{\sum_{p|d}}_{p\neq 2} 2^{\ord_2(p^2-1) - 3}
\end{align}
where the sum ranges over all odd primes $p$ dividing $d$.
\end{thm}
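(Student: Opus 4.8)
The plan is to deduce the formula from the generalized Iwasawa Riemann--Hurwitz formula, applied to the degree-two extension $k^{\cyc}_\infty/\Q^{\cyc}_\infty$ with $k = \Q(\sqrt{-d})$. Since $\Q$ has a single prime over $2$ and class number one, Theorem~\ref{vanish} gives $\lambda_2(\Q) = \mu_2(\Q) = \nu_2(\Q) = 0$, so the main term of the formula (the one scaled by $[k^{\cyc}_\infty : \Q^{\cyc}_\infty] = 2$) vanishes. This should reduce $\lambda_2(k)$ to a global correction term plus a sum of local ramification contributions running over the primes that ramify in $k^{\cyc}_\infty/\Q^{\cyc}_\infty$ and do not lie over $2$. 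I expect the global correction to produce the constant $-1$ and the ramification sum to produce $\sum_{p \mid d,\, p \neq 2} 2^{\ord_2(p^2-1)-3}$.

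The ramified primes away from $2$ are exactly the odd primes $p \mid d$: each ramifies in $k/\Q$ and hence is totally ramified, with ramification index $e = 2$, at every prime of $\Q^{\cyc}_\infty$ above it. Total ramification makes the primes of $k^{\cyc}_\infty$ over $p$ correspond bijectively to those of $\Q^{\cyc}_\infty$ over $p$, and each contributes $e - 1 = 1$; so the local contribution of $p$ equals the number $g_p$ of primes of $\Q^{\cyc}_\infty$ lying over $p$. The next step is therefore to compute $g_p$. As $p$ is unramified in $\Q^{\cyc}_\infty/\Q$, the quantity $g_p$ is the index of the (procyclic) decomposition group of $p$ in $\Gal(\Q^{\cyc}_\infty/\Q) \cong \Z_2$. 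Writing $\Gal(\Q(\zeta_{2^\infty})/\Q) \cong \{\pm 1\} \times (1 + 4\Z_2)$ and letting $u = \pm p$ be the projection of the Frobenius $p$ to the second factor, normalized so that $u \equiv 1 \pmod 4$, the decomposition group is the closure $\overline{\langle u \rangle} = 1 + 2^{v}\Z_2$ where $v = \ord_2(u-1)$. The identity $\ord_2(u^2 - 1) = \ord_2(u-1) + \ord_2(u+1)$ together with $\ord_2(u+1) = 1$ gives $v = \ord_2(p^2-1) - 1$, so $g_p = [\,1+4\Z_2 : 1+2^{v}\Z_2\,] = 2^{v-2} = 2^{\ord_2(p^2-1)-3}$. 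Summing over the odd primes dividing $d$ yields the asserted sum.

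The main obstacle I anticipate is pinning down the global correction term and showing it equals exactly $-1$. This is where the prime $2$, the units, and the root-of-unity contributions enter, and for $p = 2$ these are precisely the delicate pieces that the generalized Riemann--Hurwitz formula and the explicit computation of cohomology groups of principal ideals are built to control; an Euler-characteristic/Herbrand-quotient bookkeeping of these cohomology groups is what should produce the constant. I would verify the constant by specializing to a base case such as $d = 3$, where the sum is $2^{\ord_2(8)-3} = 1$ and the known value $\lambda_2(\Q(\sqrt{-3})) = 0$ forces the correction to be $-1$, and then check that the correction is genuinely independent of $d$, since an off-by-one error here would propagate to every field.
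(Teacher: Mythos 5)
Your scaffolding matches the paper's own route exactly: apply the generalized Riemann--Hurwitz formula (Theorem~\ref{Iw}) to $L/K$ with $K=\Q_{\infty}$ the cyclotomic $\Z_2$-extension of $\Q$ and $L=K(\sqrt{-d})$, kill $\lambda_K$ via Theorem~\ref{vanish}, and count ramified finite places away from $2$. Your decomposition-group computation of the number of primes of $\Q_{\infty}$ above an odd prime $p \mid d$ is correct and agrees with the remark after Theorem~\ref{main}: with $u=\pm p \equiv 1 \pmod 4$ and $v=\ord_2(u-1)=\ord_2(p^2-1)-1$, the index is $2^{v-2}=2^{\ord_2(p^2-1)-3}$, and each such place contributes $e-1=1$.

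The genuine gap is the term you call the ``global correction'': you never prove $\chi(G,P_L)=1$, and your proposed verification cannot be promoted to a proof. Checking $d=3$ against the known value $\lambda_2(\Q(\sqrt{-3}))=0$ is circular, since it presupposes an instance of the theorem being proved; and ``checking that the correction is genuinely independent of $d$'' is not a sanity check but precisely the hard content, because $P_L$ depends on $L=K(\sqrt{-d})$ and there is no formal reason for the Herbrand quotient to be constant in $d$. Indeed it is not constant without the hypothesis $d>2$: for $d\in\{1,2\}$ the right-hand side of the formula would be $-1$ (empty sum), which is impossible since $\lambda \geq 0$, so the hypothesis $d>2$ must enter the cohomological computation somewhere. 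In the paper it enters in the lemma ruling out $-1=\bar{u}/u$ for a unit $u$, one link in the chain that occupies the bulk of the proof of Theorem~\ref{main}: first $H^1(G,P_L)=0$ via the Hughes--Mollin theorem (Theorem~\ref{hughes}, generalizing Weber's theorem that totally positive units in the layers $k_n$ of $\Q_\infty$ are squares); then $|H^1(G,\OO_L^{\times})|=2$ via Dirichlet's unit theorem, the Herbrand-quotient formula $\chi(G_n,\OO_{\ell_n}^{\times})=t_{\infty}(n)-1$, and the $d>2$ lemma; and finally $|H^2(G,P_L)|=2$ via the surjectivity of $H^2(G,\OO_L^{\times})\rightarrow H^2(G,L^{\times})$, proved by a local--global Brauer-group argument at a carefully chosen finite level $m$. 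None of this is Euler-characteristic ``bookkeeping''; it is the theorem. Your proposal correctly isolates where the difficulty lives but leaves it unresolved.
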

Here $\ord_2(n)$ denotes the 2-adic order of an integer $n$, i.e., the largest exponent $e$ such that $2^e|n$. Thus, for example, if $p$ is a prime with $p \equiv \pm 3 \pmod{8}$, then $\ord_2(p^2-1) = 3$, so $\lambda_2(\Q(\sqrt{-p})) = -1 + 2^{3-3} = 0$. In fact, there are infinitely many such primes, so for any integer $m \geq 0$ there are infinitely many choices of $m+1$ such distinct primes $p_1, \ldots, p_{m+1}$, and we get $\lambda_2(\Q(\sqrt{-p_1 p_2\cdots p_{m+1}})) = m$.

In \cite{KidaJ}, Kida further provided a less explicit computation of $\lambda^{-}_2(k)$ for any CM number field $k$. In more detail, let $k^+$ denote the maximal real subfield of $k$, and let $A^{\ast}(k^+_n)$ denote the $2$-primary part of the narrow class group of the $n$th level $k_n^+$ in the cyclotomic $\Z_2$-extension $k_{\infty}^+$ of $k^+$. Kida showed under mild assumptions\footnote{We need only assume that $\mu_2^{\ast}(k^+) = 0$ where $\mu_2^{\ast}(k^+)$ is the ``narrow'' mu-invariant , i.e., the mu-invariant in the Iwasawa's growth formula for the narrow class numbers $h^{\ast}_n$ of $k^+_n$.} that for sufficiently large $n$ we have
\begin{align}\label{kidageneral}
\lambda^{-}_2(k) \mathrel{\mathop :}= \lambda_2(k) - \lambda_2(k^+) = \delta - \tau - 1 + \dim_{\F_2} A^{\ast}(k^+_n)/A^{\ast}(k^+_n)^2 + s_n(k/k^+)
\end{align}
where $\delta$ is 1 or 0 depending on whether or not $k_{\infty}^{\cyc}$ contains a primitive 4th root of unity, $\tau$ is 1 or 0 depending on whether or not the ramification indices of the primes dividing $2$ in $k^+_{\infty}/\Q_{\infty}$ are all even, and $s_n(k/k^+)$ denotes the number finite primes of $k^{\cyc}_n$ which ramify in $k_n^{\cyc}/k^+_n$ and do not divide $2$.

We will recover Eq. \ref{kidageneral} for certain CM number fields $k$ having $\delta=\tau=\dim_{\F_2} A^{\ast}(k^+_n)/A^{\ast}(k^+_n)^2=0$ for all $n$. To do so, we will make use of a general Hurwitz formula for number fields. Such a formula was first proven by Kida in \cite{Kida} for extensions of CM number fields and then more generally by Iwasawa in \cite{Iwas} for extensions of number fields in which no infinite primes ramify. Both formulas need $\mu=0$ assumptions. We will use a generalization of Iwasawa's formula which does not need this assumption on the ramification of infinite places. In particular, we will be able to apply this general formula directly to an extension $\ell/k$ of a CM number field $\ell$ over its maximal real subfield $k$. Kida does not use the method in \cite{KidaJ} to derive Eq. \ref{kidageneral}.

These Hurwitz formulas for number fields mentioned above have their genesis in an idea coming from Iwasawa in \cite{Iwas3} where he argued that when $\mu_p(k)=0$, the invariant $\lambda_p(k)$ is a good analog for twice the genus of a curve.

\section{Iwasawa's `Riemann-Hurwitz' Formula Revisited}
Following Iwasawa, we  define a $\Z_p$-field to be the cyclotomic $\Z_p$-extension field of a number field. In other words, $K$ is a $\Z_p$-field when $K=k_{\infty}^{\cyc}$ for the prime $p$. Note that if $k_{\infty}^{\cyc} = \ell_{\infty}^{\cyc}$ for some prime $p$ and number fields $k$, $\ell$, then $\lambda_p(k) = \lambda_p(\ell)$ and we have $\mu_p(k) = 0 \Leftrightarrow \mu_p(\ell) = 0$. Thus for a $\Z_p$-field $K$ we may define $\lambda_K$ to the be Iwasawa $\lambda$-invariant $\lambda_p(k)$ for any number field $k$ with $K = k_{\infty}^{\cyc}$. Likewise, we may define the notation $\mu_K=0$ to indicate that $\mu_p(k) = 0$ for some (and hence every) number field $k$ such that $K = k_{\infty}^{\cyc}$.

\begin{thm}\label{first}
Suppose $K$ is a $\Z_p$-field. Let $I_K$ denote the group of invertible fractional ideals in the integer ring $\OO_K$ of $K$, and let $P_K \leq I_K$ denote the subgroup of principal fractional ideals. Then the $p$-primary part $A_K$ of the class group $C_K= I_K/P_K$ satisfies
\begin{align*}
A_K \cong (\Q_p/\Z_p)^{\lambda_K} \oplus M
\end{align*}
where $p^nM = 0$ for some $n \geq 0$. Moreover, $M$ is trivial precisely when $\mu_K=0$.
\end{thm}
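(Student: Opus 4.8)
The plan is to realize $A_K$ as a direct limit along the tower and then analyze it via Pontryagin duality and the structure theory of Iwasawa modules. Write $K = \bigcup_{n\ge 0} k_n$, where $k_n$ is the $n$th layer of the cyclotomic $\Z_p$-extension of a number field $k$ with $K = k_{\infty}^{\cyc}$, and let $A_n$ be the $p$-primary part of the class group of $k_n$. Since $\OO_K = \bigcup_n \OO_{k_n}$, and since the formation of invertible ideal groups, their principal subgroups, class groups, and $p$-primary parts all commute with filtered colimits (filtered colimits being exact, and the $p$-primary part being the colimit of the $p^n$-torsion functors), I would first record the identification
\begin{align*}
A_K \;\cong\; \varinjlim_n A_n,
\end{align*}
the transition maps being induced by extension of ideals. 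In particular $A_K$ is a discrete $p$-primary torsion abelian group carrying a continuous action of $\Gamma = \Gal(K/k)\cong\Z_p$.

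Next I would pass to the Pontryagin dual $X = \Hom(A_K,\Q_p/\Z_p) = \varprojlim_n A_n^\vee$, a compact module over the Iwasawa algebra $\Lambda = \Z_p[[\Gamma]]\cong\Z_p[[T]]$. Under the standard identification in which extension and norm are adjoint for the canonical pairing on the finite groups $A_n$, this $X$ is the usual Iwasawa module $\varprojlim_n A_n$ formed along the norm maps, so the foundational results behind the growth formula quoted in the introduction show that $X$ is finitely generated and torsion over $\Lambda$, with $\Z_p$-rank equal to $\lambda_K$ and $\mu$-invariant equal to the $\mu$ of the tower. The structure theorem then gives a pseudo-isomorphism $X \sim \bigoplus_i \Lambda/(p^{a_i}) \oplus \bigoplus_j \Lambda/(f_j(T)^{b_j})$ with each $f_j$ distinguished and irreducible, $\lambda_K = \sum_j b_j\deg f_j$ and $\mu = \sum_i a_i$. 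Dualizing the elementary pieces, a distinguished quotient $\Lambda/(f_j^{b_j})$ is $\Z_p$-free of rank $b_j\deg f_j$ and dualizes to a divisible group $(\Q_p/\Z_p)^{b_j\deg f_j}$, while $\Lambda/(p^{a_i})\cong(\Z/p^{a_i})[[T]]$ dualizes to a group killed by $p^{a_i}$. Since corank is a pseudo-isomorphism invariant, collecting these summands yields the asserted decomposition $A_K \cong (\Q_p/\Z_p)^{\lambda_K}\oplus M$ with $p^nM = 0$; the implication ``$\mu_K>0 \Rightarrow M\neq 0$'' also drops out, as a surviving $\Lambda/(p^{a_i})$ summand contributes a nonzero bounded-exponent (indeed infinite) piece to $M$.

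The delicate point, and the step I expect to be the main obstacle, is the reverse implication: that $\mu_K = 0$ forces $M = 0$, i.e.\ that $A_K$ is genuinely $p$-divisible and not merely divisible up to a finite reduced summand. Pseudo-isomorphism controls $X$ only up to finite kernel and cokernel, so when $\mu_K = 0$ it yields $X \sim \Z_p^{\lambda_K}$ but leaves open a possible nonzero finite $\Lambda$-submodule of $X$, equivalently a finite reduced summand of $A_K$; note that the growth formula alone cannot detect this, since it is consistent with any finite $M$. The plan is to eliminate it directly in the tower: after replacing $k$ by a high enough layer so that every ramified prime is totally ramified in $K/k$, I would invoke Iwasawa's control on the norm and transition maps (the comparison of $A_n$ with the $\Gamma$-coinvariants and invariants of $X$) to conclude that $X$ has no nonzero finite $\Lambda$-submodule. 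A finitely generated $\Z_p$-module with no nonzero finite submodule is free, so $\mu_K = 0$ gives $X \cong \Z_p^{\lambda_K}$ exactly, whence $A_K \cong (\Q_p/\Z_p)^{\lambda_K}$ is divisible and $M = 0$. This divisibility is precisely the structural input on which the later explicit computations of cohomology of principal ideals will rest.
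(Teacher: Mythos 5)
Your skeleton — pass to the Pontryagin dual, apply the structure theory of finitely generated torsion $\Lambda$-modules over $\Lambda=\Z_p[[T]]$, and isolate ``the dual of $A_K$ has no nonzero finite $\Lambda$-submodule'' as the crux of the direction $\mu_K=0\Rightarrow M=0$ — is the right shape, and indeed the paper offers no proof of its own, deferring exactly this content to Iwasawa's papers. But your execution of the crux rests on a false identification. There is no canonical pairing on the finite groups $A_n$ making the extension maps adjoint to the norm maps: $A_n$ is self-dual only non-canonically, and in general $(\varinjlim_n A_n)^{\vee}$ and $\varprojlim_n A_n$ (norm maps) are \emph{not} isomorphic as $\Lambda$-modules; Iwasawa's comparison says only that they are pseudo-isomorphic after twisting the $\Gamma$-action by $\gamma\mapsto\gamma^{-1}$, and that comparison is itself a theorem, not a formal duality. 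Concretely: whenever $\lambda_K=\mu_K=0$ but $A_0\neq 0$ (numerically verified instances of Greenberg's conjecture for real quadratic fields supply such cases), the module $X=\varprojlim_n A_n$ is finite and nonzero, while $A_K=\varinjlim_n A_n=0$ because every class capitulates up the tower. This simultaneously refutes your identification $A_K^{\vee}\cong\varprojlim_n A_n$ and your key claim that $\varprojlim_n A_n$ has no nonzero finite $\Lambda$-submodule — a finite nonzero $X$ is its own finite submodule. The control-theorem comparison of $A_n$ with (co)invariants of $X$, even after passing to a layer where all ramified primes are totally ramified, cannot rule this out: it is perfectly consistent with $A_n$ constant and nonzero.

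What is true, and what the statement actually requires, is that $A_K^{\vee}=\varprojlim_n A_n^{\vee}$ — the limit of the duals, not the limit along norms — has no nonzero finite $\Lambda$-submodule, equivalently that $A_K$ modulo its maximal divisible subgroup has no finite summand. Iwasawa obtains this by identifying $A_K^{\vee}$, up to the inversion twist, with an adjoint module $\alpha(\cdot)$ (realizable as $\mathrm{Ext}^1_{\Lambda}(\cdot,\Lambda)$), and adjoints of torsion $\Lambda$-modules have no nonzero finite submodules; no version of the control theorem substitutes for this. The remainder of your argument is essentially sound: $A_K\cong\varinjlim_n A_n$ is fine since invertible ideals of $\OO_K$ are finitely generated and hence come from finite levels; the bounded exponent of $M$ follows from finite generation over $\Lambda$ of the $\Z_p$-torsion submodule of the dual; and $\mu_K>0\Rightarrow M\neq 0$ survives pseudo-isomorphism because a $\Lambda/(p^{a})$ summand contributes an infinite bounded-exponent piece. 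Note, though, that even to know the corank of $A_K$ equals $\lambda_K$ you already need the (twisted) pseudo-isomorphism between $A_K^{\vee}$ and $\varprojlim_n A_n$, so that input must be cited honestly rather than derived from the nonexistent canonical duality.
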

This structure theorem, whose proof may be gleaned from \cite{Iwas5} and \cite{Iwas4}, allows us to pin down the analogy between number fields and curves in the spirit of \cite{Iwas3}. The analogy is illustrated in Table \ref{blahintro}.
\begin{table}
\begin{center}
\begin{tabular}{|p{5.9cm}|p{5.9cm}|}
\hline
If $K$ is the function field of a smooth, projective curve $X$ over $\mathbb{C}$, then the local rings of $X$ at closed points are DVRs, and the $p$-primary part of the Picard group satisfies

~

\hspace{.5 in} $\Pic(X)[p^{\infty}] \cong (\Q_p/\Z_p)^{2g_K}$

~

where $g_K$ is the genus.

&

If $K$ is a $\Z_p$-field with $\mu_K=0$ and $X = \Spec(\OO_K[1/p])$, then the local rings of $X$ at closed points are DVRs, and the $p$-primary part of the Picard group satisfies

~

\hspace{0.5 in} $\Pic(X)[p^{\infty}] \cong (\Q_p/\Z_p)^{\lambda_K}$

~

where $\lambda_K$ is an Iwasawa invariant.

\\
\hline
\end{tabular}
\end{center}
\caption{Similarities in structures of Picard groups}
\label{blahintro}
\end{table}
A few remarks are in order. The ring $\OO_K$ is not Noetherian, but by inverting $p$, the resulting ring $\OO_K[1/p]$ is actually a Dedekind domain, so its prime spectrum is a nice scheme which shares many properties with curves. This also helps to explain why the ramification for the prime $p$ is missing in Iwasawa's `Riemann-Hurwitz' formula; however, we will not use this interpretation in the proof of Iwasawa's formula, but the so-called Dedekind different formula implies that a purely geometric proof of Iwasawa's result should exist. Before stating Iwasawa's `Riemann-Hurwitz' formula, we need a definition.
\begin{defn}
Let $G$ be a cyclic $p$-group. For a $G$-module $M$, define the `Euler characteristic' $\chi(G,M) \in \Z$ to be the exponent of $p$ in the Herbrand quotient
\begin{align*}
p^{\chi(G,M)} = \frac{|H^2(G,M)|}{|H^1(G,M)|}
\end{align*}
when both cohomology groups $H^i(G,M)$ are finite for $i=1,2$. Note that $\chi$ inherits the following properties from the Herbrand quotient:
\begin{enumerate}
\item $\chi$ is additive on short exact sequences of $G$-modules\footnote{If $0\rightarrow A \rightarrow B \rightarrow C \rightarrow 0$ is a SES of $G$-modules and two of the numbers $\chi(G, A)$, $\chi(G, B)$, $\chi(G, C)$ are finite (i.e., well-defined), then so is the other and $\chi(G, A) -\chi(G, B) + \chi(G, C)= 0$.}
\item $\chi(G,M) = 0$ when $M$ is a finite $G$-module
\item $\chi(G, M^{\ast}) = -\chi(G,M)$ when $M^{\ast} = \Hom_{\Z_p}(M, \Q_p/\Z_p)$ is the $p$-Pontryagin dual of a $\Z_pG$-module $M$.
\end{enumerate}
\end{defn}
\begin{thm}[Iwasawa's `Riemann-Hurwitz' Formula]\label{Iw}
Let $L/K$ be a $\Z/(p)$-extension of $\Z_p$-fields with $G = \Gal(L/K)$. Suppose $\mu_K=0$. Then $\mu_L=0$ and
\begin{equation}\label{equat}
\lambda_L = p\lambda_K - (p-1)\chi(G, P_L) + \sum_{w \nmid p} (e(w)-1)
\end{equation}
where the sum ranges over all finite places $w$ of $L$ not lying above $p$, $e(w)$ is the ramification index of $w$ in $L/K$, and $G$ acts in the obvious way on the principal fractional ideals $P_L$ of the integer ring $\OO_L$ in $L$.
\end{thm}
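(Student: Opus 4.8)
The plan is to run Iwasawa's additivity argument for the Euler characteristic $\chi(G,-)$ on the short exact sequence of $G$-modules
\[ 0 \to P_L \to I_L \to C_L \to 0, \]
where $I_L$ is the group of invertible fractional ideals. First I would record that $\mu_L = 0$: since $L/K$ is a $\Z/(p)$-extension it is in particular a $p$-extension, so this follows from Iwasawa's theorem (\cite{Iwas6}) that $\mu$-vanishing propagates up $p$-extensions, and then Theorem \ref{first} gives $A_L \cong (\Q_p/\Z_p)^{\lambda_L}$ and $A_K \cong (\Q_p/\Z_p)^{\lambda_K}$. A key structural point, already flagged in the remarks after Theorem \ref{first}, is that the localizations of $\OO_L$ at primes above $p$ are non-discrete valuation rings, so those primes are \emph{not} invertible; hence $I_L = \bigoplus_{w \nmid p}\Z$ and the primes over $p$ never enter the computation. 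This is precisely why no ramification at $p$ appears in the formula. Since $C_L$ is torsion and its prime-to-$p$ part is uniquely $p$-divisible, hence cohomologically trivial for the finite $p$-group $G$, we get $\chi(G, C_L) = \chi(G, A_L)$, and additivity of $\chi$ yields $\chi(G, A_L) = \chi(G, I_L) - \chi(G, P_L)$.

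Next I would compute $\chi(G, I_L)$. Decomposing $I_L$ into $G$-orbits of primes and applying Shapiro's lemma, $\chi(G, I_L) = \sum_v \chi(G_w, \Z)$, summed over primes $v \nmid p$ of $K$, with $G_w$ the decomposition group of a chosen $w \mid v$. The crucial input is that \emph{every} prime $v \nmid p$ unramified in $L/K$ in fact splits completely: the decomposition group of $v$ in $\Gal(L/k)$ is procyclic (topologically generated by a Frobenius), and its image in $\Gal(K/k) \cong \Z_p$ is the closure of the Frobenius of the underlying rational prime $q \neq p$, which has infinite order because $q$ cannot be $\equiv 1$ to all $p$-power moduli. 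A torsion-free procyclic group surjecting onto an open subgroup of $\Z_p$ is itself isomorphic to $\Z_p$, so its intersection with $\Gal(L/K)$ is trivial. Thus split primes contribute $\chi(G, \Z[G]) = 0$, while the finitely many ramified primes $w \nmid p$ (necessarily totally ramified, so $G_w = G$) each contribute $\chi(G, \Z) = 1$. As each such $w$ contributes $e(w)-1 = p-1$ to the sum, this gives $\chi(G, I_L) = \tfrac{1}{p-1}\sum_{w \nmid p}(e(w)-1)$.

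The remaining and most delicate step is to express $\chi(G, A_L)$ through $\lambda_L$ and $\lambda_K$. Using property (3) of $\chi$ I would pass to the Pontryagin dual $X_L = \Hom_{\Z_p}(A_L, \Q_p/\Z_p) \cong \Z_p^{\lambda_L}$ and write $X_L \otimes \Q_p \cong \Q_p^{a} \oplus \Q_p(\zeta_p)^{b}$ as a $\Q_p[G]$-module, so $\lambda_L = a + (p-1)b$. Since the Herbrand quotient depends only on the rational representation (commensurable lattices agree), a direct computation gives $\chi(G, \Z_p) = 1$ and $\chi(G, \Z_p[\zeta_p]) = -1$ (the latter from $0 \to \Z_p \to \Z_p[G] \to \Z_p[\zeta_p] \to 0$ together with $\chi(G,\Z_p[G]) = 0$), whence $\chi(G, A_L) = -\chi(G, X_L) = b - a$. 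The main obstacle is the identification $a = \lambda_K$: this is the assertion that the natural map $A_K \to A_L^{G}$ has finite kernel and cokernel, so that the trivial-isotypic rank of $X_L$ matches $\mathrm{corank}(A_K) = \lambda_K$. I expect to prove this by genus theory at the finite layers together with control of capitulation, the point being that only finitely many primes ramify away from $p$ and each contributes a bounded, corank-zero term up the tower. Granting $a = \lambda_K$ one finds $b = (\lambda_L - \lambda_K)/(p-1)$ and hence $\chi(G, A_L) = (\lambda_L - p\lambda_K)/(p-1)$.

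Finally I would combine the three computations: substituting the values of $\chi(G, A_L)$ and $\chi(G, I_L)$ into $\chi(G, P_L) = \chi(G, I_L) - \chi(G, A_L)$ and clearing the denominator $p-1$ rearranges at once into the desired identity $\lambda_L = p\lambda_K - (p-1)\chi(G, P_L) + \sum_{w\nmid p}(e(w)-1)$. I anticipate the genus-theoretic identification $a = \lambda_K$ to be the true heart of the argument; the finiteness of all cohomology groups (needed for $\chi$ to be defined) should fall out along the way, from $A_L$ being cofinitely generated and from the orbit decomposition of $I_L$ having only finitely many summands with nontrivial cohomology.
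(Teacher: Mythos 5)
Your skeleton coincides with the paper's: both arguments get $\mu_L=0$ from Iwasawa's propagation result, run additivity of $\chi$ over $0\to P_L\to I_L\to C_L\to 0$, use $\chi(G,C_L)=\chi(G,A_L)$, dualize to the lattice $A_L^{\ast}\cong\Z_p^{\lambda_L}$, decompose it as a $G$-module, and convert rank data into the formula via the map $A_K\to A_L^G$. Your two substitutions are genuine and both workable. Where the paper invokes Diederichsen's integral classification of $\Z_pG$-lattices, you decompose only rationally ($\Q_pG\cong\Q_p\times\Q_p(\zeta_p)$) and use isogeny-invariance of the Herbrand quotient; this is a legitimate simplification, since the integral classification is needed in the paper only for the closing refinement identifying $A_L^{\ast}$ as a $\Z_pG$-module, which the formula itself does not require. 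Where the paper simply cites Iwasawa for $\chi(G,I_L)=|S|$, you reprove it; the content (primes over $p$ are not invertible, every $w\nmid p$ is totally ramified or totally split, and only the ramified orbits contribute $\chi(G,\Z)=1$) is correct, but two details need repair: $L/k$ need not be Galois, so ``the decomposition group of $v$ in $\Gal(L/k)$'' is undefined as written --- either replace $k$ by a layer $k_n$ over which the relevant degree-$p$ extension is Galois (then $L$ is Galois over $k_n$), or argue locally, noting that the residue field of $K$ at $w\nmid p$ contains the unramified $\Z_p$-tower over a finite field and hence admits no degree-$p$ extension; and torsion-freeness of a procyclic group is not automatic --- what you actually need is that its pro-$p$ part, being procyclic pro-$p$ and surjecting onto an open subgroup of $\Z_p$, is itself $\Z_p$, hence torsion-free, so the order-at-most-$p$ kernel of the map to $\Gal(K/k)$ is trivial.

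The genuine gap is the step you yourself flag as the heart: the identification $a=\lambda_K$, i.e., that $A_K\to A_L^G$ has finite kernel and cokernel. You leave this as ``I expect to prove this by genus theory at the finite layers together with control of capitulation,'' which is not an argument, and it points at heavier machinery than the situation demands: at finite layers the kernels and cokernels of $A_{k_n}\to A_{\ell_n}^{G_n}$ are exactly the quantities that are hard to control uniformly, and the limit statement is what you need. The paper closes this inside the very exact sequence you already used: map $0\to P_K\to I_K\to C_K\to 0$ to the $G$-invariants of its $L$-analogue and apply the snake lemma. Concretely, the kernel of $A_K\to A_L$ is killed by $p$ because norm composed with extension is multiplication by $p=[L:K]$, and $A_K[p]$ is finite since $\mu_K=0$; the cokernel is finite because $I_K\to I_L^G$ is injective with cokernel supported on the finitely many ramified primes, while the obstruction to an invariant class coming from an invariant ideal lies in $H^1(G,P_L)$, which is finite by the same additivity bookkeeping you have already done (finiteness of $H^i(G,I_L)$ and $H^i(G,C_L)$ forces finiteness of $H^i(G,P_L)$ through the long exact sequence). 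With that supplied, $\lambda_K=\rank_{\Z_p}\bigl((A_L^{\ast})^G\bigr)=a$ and your computation closes; without it, your formula still contains the undetermined parameter $a$, so as submitted the proof is incomplete at precisely the point where the paper's short snake-lemma argument does the work.
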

\begin{rem}
If the extension $L/K$ in Theorem \ref{Iw} is unramified at the infinite places (always true, e.g., when $p$ is odd), then Iwasawa showed that $H^2(G,L^{\times}) = 0$, so additivity of the `Euler characteristic' $\chi$ shows that $-\chi(G,P_L) = \chi(G,\OO_L^{\times})$, which recovers the formula as originally stated by Iwasawa in \cite{Iwas}.
\end{rem}
\begin{proof}[Proof of Theorem \ref{Iw}]
Using the notation of Theorem \ref{first} above, we let $A_L$ denote the $p$-primary part of the class group $C_L = I_L/P_L$ where again $I_L, P_L$ are the groups of invertible and principal, respectively, fractional ideals of the integer ring $\OO_L$. The statement that $\mu_L=0$ follows from $\mu_K = 0$ by a result mentioned above (see \cite{Iwas6}) since $L/K$ is a $p$-extension, so the structure theorem implies
\begin{align*}
A_L^{\ast} = \Hom_{\Z_p}(A_L, \Q_p/\Z_p) \cong \Hom_{\Z_p}((\Q_p/\Z_p)^{\lambda_L}, \Q_p/\Z_p) \cong \Z_p^{\lambda_L}
\end{align*}
as $\Z_p$-modules. On the other hand, we have the following classification theorem.
\begin{thm}[Diederichsen, \cite{Died}]\label{p}
Let $\<g\> = G \cong \Z/(p)$. The only indecomposable $\Z_pG$-modules which are free of finite rank over $\Z_p$ are (up to isomorphism) $\Z_p$, $\Z_pG$, and the augmentation ideal $I_pG = (g-1)\Z_pG$.
\end{thm}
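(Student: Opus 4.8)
The plan is to classify all $\Z_pG$-lattices (finitely generated $\Z_pG$-modules that are free over $\Z_p$) by decomposing each according to the two idempotents of $\Q_pG$ and then analyzing the finite ``gluing'' that obstructs an outright splitting. First I would extend scalars. Since $x^p-1=(x-1)\Phi_p(x)$ with $\Phi_p$ the $p$th cyclotomic polynomial, the norm element $N=1+g+\cdots+g^{p-1}$ gives orthogonal idempotents $e_0=\tfrac1pN$ and $e_1=1-e_0$ realizing $\Q_pG\cong\Q_p\times\Q_p(\zeta_p)$. Thus for any lattice $M$ the space $V=\Q_p\otimes_{\Z_p}M$ splits as $V=e_0V\oplus e_1V$, and I set $M_0=M\cap e_0V=M^G$ and $M_1=M\cap e_1V=\ker(N|_M)$. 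These are saturated sublattices, free over $\Z_p$ and over $\OO=\Z_p[\zeta_p]$ respectively (both discrete valuation rings), of some ranks $a$ and $b$.

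Next I would isolate the obstruction to $M$ being $M_0\oplus M_1$. Writing $\sigma=g-1$, one checks $\sigma M\subseteq M_1$ and $NM\subseteq M_0$ (because $\sigma$ kills $e_0V$ and $N$ kills $e_1V$, while both operators preserve $M$), so the quotient $\bar M=M/(M_0\oplus M_1)$ is annihilated by $\sigma$ and by $N$, hence is a module over $\Z_pG/(\sigma,N)\cong\F_p$, i.e. a finite-dimensional $\F_p$-vector space, say of dimension $r$. The operators then induce $\F_p$-linear maps $\bar\sigma\colon\bar M\to M_1/\sigma M_1$ and $\bar N\colon\bar M\to M_0/pM_0$, using that $\sigma M_1=\pi M_1$ for the uniformizer $\pi=\zeta_p-1$ of $\OO$ and that $NM_0=pM_0$. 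The key claim is that both maps are injective, which I would prove from saturation: if $\bar m\neq0$ then neither $e_0m\in M_0$ nor $e_1m\in M_1$ can hold, whence $Nm=p\,e_0m\notin pM_0$ and $\sigma m=\pi\,e_1m\notin\pi M_1$, so $\bar N(\bar m)\neq0$ and $\bar\sigma(\bar m)\neq0$. Establishing this non-degeneracy of the gluing data is the heart of the argument.

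With the injections in hand the decomposition falls out. Choosing lifts $m_1,\dots,m_r\in M$ of an $\F_p$-basis of $\bar M$, injectivity of $\bar\sigma$ makes $\sigma m_1,\dots,\sigma m_r$ independent modulo $\pi M_1$, so Nakayama's lemma over the local ring $\OO$ extends them to an $\OO$-basis, giving $M_1=\bigoplus_i\OO\,\sigma m_i\oplus M_1''$ with $M_1''\cong\OO^{\,b-r}$; likewise injectivity of $\bar N$ extends $Nm_1,\dots,Nm_r$ to a $\Z_p$-basis of $M_0$, splitting off $\Z_p^{\,a-r}$. A short computation shows $\Z_pG\cdot m_i=\Z_p m_i\oplus\OO\,\sigma m_i$ has $\Z_p$-rank $p$, so the surjection $\Z_pG\to\Z_pG\cdot m_i$ of free $\Z_p$-modules of equal rank is an isomorphism. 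I would then verify that the natural map $\bigoplus_i\Z_pG\,m_i\oplus\Z_p^{\,a-r}\oplus M_1''\to M$ is surjective and compare $\Z_p$-ranks: the source has rank $rp+(a-r)+(b-r)(p-1)=a+b(p-1)=\rank_{\Z_p}M$, so this surjection of free $\Z_p$-modules of equal rank is an isomorphism. Hence $M\cong(\Z_pG)^{r}\oplus\Z_p^{\,a-r}\oplus(I_pG)^{\,b-r}$, using $I_pG=\sigma\Z_pG\cong\Z_pG/(N)\cong\OO$ as $\Z_pG$-modules.

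Finally I would record that the three named modules are indecomposable and pairwise non-isomorphic: $\Z_p$ and $I_pG\cong\OO$ are rank-one lattices over the domains $\Z_p$ and $\OO$ carrying distinct $G$-actions, while $\Z_pG$ is indecomposable because $\Z_pG/p\Z_pG\cong\F_p[t]/(t^p)$ is local and hence $\Z_pG$ has no nontrivial idempotents. Combined with the decomposition above, this shows these are exactly the indecomposables. The main obstacle is the injectivity of the gluing maps $\bar\sigma,\bar N$ together with the rank count that forces directness; an alternative route is to view $\Z_pG\cong\Z_p\times_{\F_p}\OO$ as a conductor (Milnor) square and invoke module patching, but the direct lattice argument above is cleaner and self-contained.
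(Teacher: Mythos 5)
Your proposal is correct, but note that the paper itself does not prove Theorem \ref{p}: it is quoted as a known classification theorem with a citation to Diederichsen, so there is no internal proof to compare against. Your argument is the standard classical route (as in Diederichsen and in Curtis--Reiner): split $\Q_pG\cong \Q_p\times\Q_p(\zeta_p)$ by the idempotents $e_0=\frac1pN$, $e_1=1-e_0$, pass to the sublattices $M_0=M\cap e_0V$, $M_1=M\cap e_1V$, and analyze the $\F_p$-vector space $\bar M=M/(M_0\oplus M_1)$ as gluing data over the conductor square $\Z_pG\cong\Z_p\times_{\F_p}\OO$. I checked the delicate points and they all go through. The key injectivity claim is right, and the implicit step there is sound: if $e_0m\in M_0$ then $e_1m=m-e_0m\in M\cap e_1V=M_1$ and symmetrically, so $\bar m\neq 0$ forces \emph{both} $e_0m\notin M_0$ and $e_1m\notin M_1$, whence $Nm=pe_0m\notin pM_0$ and $\sigma m=\pi e_1m\notin\pi M_1$ by torsion-freeness of $e_0V$ and $e_1V$. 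The surjectivity you defer is immediate from your constructed bases (any $m$ is congruent to $\sum c_im_i$ modulo $M_0\oplus M_1$, and $\Z_pNm_i,\ \OO\sigma m_i\subseteq\Z_pGm_i$), and the rank count $rp+(a-r)+(b-r)(p-1)=a+b(p-1)$ then forces the surjection of free $\Z_p$-modules of equal rank to be an isomorphism, since its kernel is torsion-free of rank zero. Your completeness conclusion also needs no Krull--Schmidt: applying the decomposition to an indecomposable $M$ leaves a single summand, which must be one of $\Z_p$, $\Z_pG\cong\Z_pGm_i$, or $I_pG\cong\Z_pG/(N)\cong\OO$. A bonus of your argument over the bare citation is that it yields the explicit multiplicities $(\Z_pG)^r\oplus\Z_p^{a-r}\oplus(I_pG)^{b-r}$ with $r=\dim_{\F_p}\bar M$, which is in the same spirit as the refined decomposition of $A_L^{\ast}$ recorded at the end of the paper's proof of Theorem \ref{Iw}.
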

Hence
\begin{align*}A_L^{\ast} \cong \Z_p^a \oplus (\Z_pG)^b \oplus (I_pG)^c\end{align*}
as $\Z_pG$-modules for some nonnegative integers $a,b,c$. It is easy to compute the $\Z_p$-ranks, $G$-invariants, and Euler characteristics, of these indecomposables. The results are summarized in Table \ref{tab0}.
\begin{table}
\begin{align*}
\begin{array}{c|c|c|c|c|c|}
 & \mbox{rank}_{\Z_p}(-) & (-)^G & H^2(G, -) & H^1(G, -) & \chi(G, -) \\ \hline
 \Z_p & 1 & \Z_p & \Z_p/p\Z_p & 0 & 1 \\ \hline
 \Z_pG & p & \Z_p & 0 & 0 & 0 \\ \hline
 I_pG & p-1 & 0 & 0 & \Z_p/p\Z_p & -1 \\ \hline
\end{array}
\end{align*}
\caption{}
\label{tab0}
\end{table}
Moreover, if $S$ is the set of finite places of $K$ which do not lie above $p$ and ramify in $L/K$, then Iwasawa showed in \cite{Iwas} that
\begin{align*}
\chi(G,I_L) = |S|.
\end{align*}
Also, the last column in Table \ref{tab0} implies
\begin{align*}
\chi(G,A_L^{\ast}) = a \cdot 1 + b \cdot 0 + c\cdot(-1) = a-c.
\end{align*}
Hence duality gives
\begin{align*}
\chi(G,A_L) = -\chi(G,A_L^{\ast}) = -a+c,
\end{align*}
but
\begin{align*}
\chi(G,C_L) = \chi(G,A_L)
\end{align*}
since $G$ is a $p$-group and $C_L$ is torsion, so $\chi(G, P_L)$ is also finite and additivity gives
\begin{align*}
-\chi(G, P_L) + |S| = - \chi(G,P_L) +  \chi(G,I_L) = \chi(G, C_L) = -a+c.
\end{align*}
Also, the natural map
\begin{align*}
A_K \rightarrow A_L^G
\end{align*}
has finite kernel and finite cokernel since the same is true of $C_K \rightarrow C_L^G$ as may be seen from the snake lemma. Thus the first two columns in Table \ref{tab0} show that
\begin{align*}\lambda_K &= \rank_{\Z_p}(A_K^{\ast}) = \rank_{\Z_p}((A_L^G)^{\ast}) =\rank_{\Z_p}((A_L^{\ast})^G) \\
&= a \cdot 1 + b \cdot 1 + c \cdot 0 = a+b.\end{align*}
Putting all of this together, we get
\begin{align*}
\lambda_L &= a \cdot 1 + b \cdot p + c (p-1) = p(a+b) + (p-1)(-a+c) \\
&= p\lambda_K - (p-1)\chi(G, P_L) + (p-1)|S|,
\end{align*}
as needed. We have shown more than just a formula for the $\lambda$-invariants; in fact,
\begin{align*}
A_L^{\ast} \cong \Z_p^a \oplus (\Z_pG)^{\lambda_K - a} \oplus (I_pG)^{|S|-\chi(G,P_L)+a}
\end{align*}
as $\Z_pG$-modules for some nonnegative integer $a$ with $\chi(G,P_L) - |S| \leq a \leq \lambda_K$.
\end{proof}
\section{Main Result}
\begin{thm}\label{main}
Suppose $p$ is a prime of the form $2^t+1$ for some integer $t \geq 0$ and let $d > 2 \geq (d, p)$ be a squarefree integer. Take $k$ to be the unique real subfield of $\Q(\zeta_{2p^2})$ such that $[k:\Q]=p$, and take $K = k_{\infty}^{\cyc}$ to be the cyclotomic $\Z_2$-extension of $k$. If the class number $h_k$ of $k$ is odd (e.g., we can take $p \in \{2, 3, 5, 17, 257\}$), then 
\begin{align*}
|H^1(G, P_L)|= 1 \mbox{ and } |H^2(G,P_L)|=2
\end{align*}
where $L = K(\sqrt{-d})$, $G = \Gal(L/K)$, and $P_L$ denotes the principal fractional ideals of $L$. In particular, we have $\chi(G,P_L)=1$, and Theorem \ref{Iw} implies the following special case of \cite{KidaJ}:
\begin{align*}\lambda_2(k(\sqrt{-d})) = -1 + |S| \end{align*}
where $S$ is the set of finite places of $K$ not lying above $2$ which ramify in $L/K$.
\end{thm}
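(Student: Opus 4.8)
The plan is to extract the two cohomology groups from two short exact sequences of $G$-modules, after first using Iwasawa's vanishing theorem to trivialize the base. Since $p = 2^t+1$ and $p\nmid d$, the only ramification of $k/\Q$ is at $p$, and one checks that $2$ has a unique prime $\p$ of $k$ above it (for odd $p$ this is inertia, since the Frobenius of $2$ generates $\Gal(k/\Q)\cong \Z/(p)$ precisely when $2^{p-1}\not\equiv 1 \pmod{p^2}$). With $h_k$ odd, Theorem \ref{vanish} then gives $\lambda_2(k) = 0$ and, more strongly, $A_n = 0$ at every layer, so the $2$-class group $A_K$ of $K$ is trivial. Feeding $\lambda_K = 0$ into the structural decomposition established in the proof of Theorem \ref{Iw} forces $a = b = 0$, so $A_L^{\ast}\cong (I_pG)^c$ for some $c$; dualizing, $A_L \cong (\Q_p/\Z_p)^c$ with complex conjugation acting as $-1$. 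A direct calculation then yields $H^1(G, C_L) = 0$ and $\hat H^0(G, C_L) = H^2(G, C_L)\cong (\Z/2)^c$, where $c = \lambda_L = \lambda_2(k(\sqrt{-d}))$ is the quantity to be determined.

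First I would analyze $I_L$. As a $G$-module $I_L = \bigoplus_v \mathrm{Ind}_{G_v}^G \Z$ over the finite places $v$ of $K$, so $H^1(G, I_L) = 0$ and $\hat H^0(G, I_L)$ is elementary $2$-abelian, generated by the classes of the primes $\mathfrak{P}_v$ lying over the non-split $v$. The key local input is that the non-split finite places away from $2$ are exactly the ramified ones, i.e.\ the set $S$: a finite place $v\nmid 2$ of $K$ has infinite residue field (no finite prime splits completely in the $\Z_2$-tower, and such $v$ are unramified there), and every element of a finite field of odd order is a square in its quadratic extension, so the $v$-unit $-d$ becomes a square and $v$ splits in $L = K(\sqrt{-d})$ unless $v\mid d$. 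Combining $H^1(G, I_L) = 0$ with Iwasawa's equality $\chi(G, I_L) = |S|$ gives $\hat H^0(G, I_L)\cong (\Z/2)^{|S|}$. Taking the $2$-periodic Tate cohomology sequence of $1\to P_L\to I_L\to C_L\to 1$ and using the vanishing of $H^1(G, I_L)$ and $H^1(G, C_L)$ collapses it to
\begin{align*}
0 \to H^2(G, P_L) \to (\Z/2)^{|S|} \xrightarrow{\ \psi\ } (\Z/2)^c \to H^1(G, P_L) \to 0,
\end{align*}
in which $\psi$ sends a ramified place $v$ to the ideal class of $\mathfrak{P}_v$. Thus $H^2(G, P_L) = \ker\psi$ and $H^1(G, P_L) = \coker\psi$.

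To kill $H^1(G, P_L)$ I would pass to the unit sequence $1\to \OO_L^{\times}\to L^{\times}\to P_L\to 1$. Hilbert 90 gives $H^1(G, L^{\times}) = 0$, so $H^1(G, P_L)\cong \ker\big(\hat H^0(G, \OO_L^{\times})\to \hat H^0(G, L^{\times})\big) = (\OO_K^{\times}\cap N_{L/K}L^{\times})/N_{L/K}\OO_L^{\times}$. The goal is to show every unit of $K$ that is a global norm is the norm of a unit of $L$. By the Hasse norm theorem a global norm is a local norm everywhere; at the ramified archimedean places this forces total positivity, and the hypotheses that the $2$-part of the narrow class group vanishes (the condition $\dim_{\F_2}A^{\ast}(k_n^+)/A^{\ast}(k_n^+)^2 = 0$, together with $\delta = \tau = 0$) and that $A_K = 0$ then let one descend such a unit to a norm of a unit. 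This gives $H^1(G, P_L) = 0$, equivalently that $\psi$ is surjective, so $\rank\psi = c$.

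It remains to show $c = |S| - 1$, whence $|H^2(G, P_L)| = |\ker\psi| = 2^{|S| - c} = 2$ and $|H^1(G, P_L)| = 1$. Here I would invoke the ambiguous class number (Chevalley) formula for the minus part of $L/K$: with $A_K = 0$, the archimedean and $2$-adic contributions neutralized by $\delta = \tau = 0$ and the splitting of the prime over $2$, and the unit index controlled as in the previous step, the formula for $|A_L^G| = 2^c$ returns exactly $|S|$ ramified-prime contributions cut down by a single relation. That relation is visible directly, since $(\sqrt{-d}) = \prod_{v\in S}\mathfrak{P}_v$ is principal, so the all-ones vector lies in $\ker\psi$; this is the source of the $-1$. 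The hard part will be precisely this last count at the infinite level --- controlling the archimedean and unit terms in the genus formula for the CM tower, where the infinitude of real places of $K$ must be absorbed by the minus-part formalism --- together with the descent of norm-units in the previous paragraph; both steps are exactly where $A_K = 0$ and the narrow-class/unit hypotheses are indispensable. Once $c = |S| - 1$ is in hand, $\chi(G, P_L) = 1$ and Theorem \ref{Iw} delivers $\lambda_2(k(\sqrt{-d})) = -1 + |S|$.
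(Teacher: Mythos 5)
Your reduction is attractive and genuinely different in organization from the paper's: the paper works entirely through the unit sequence $1\to\OO_L^{\times}\to L^{\times}\to P_L\to 1$, while you run the Tate sequence of $1\to P_L\to I_L\to C_L\to 1$, and your computations of $\hat{H}^{*}(G,I_L)$ (non-split finite places away from $2$ are exactly the ramified ones, since residue fields in the $\Z_2$-tower acquire all square roots) and of $\hat{H}^{*}(G,C_L)$ from $A_L^{\ast}\cong(I_2G)^c$ are correct. But the two steps you yourself flag as ``the hard part'' are the entire content of the theorem, and as written they do not follow from the theorem's actual hypotheses. You invoke ``the hypotheses that the $2$-part of the narrow class group vanishes ($\dim_{\F_2}A^{\ast}(k_n^+)/A^{\ast}(k_n^+)^2=0$, together with $\delta=\tau=0$)'' --- these are \emph{not} hypotheses of Theorem \ref{main}; they are conclusions that must be derived from ``$h_k$ odd'' plus the special arithmetic of $k$, so your norm-unit descent is circular at exactly its load-bearing point. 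The missing ingredient is the Hughes--Mollin theorem (Theorem \ref{hughes}, generalizing Weber): every totally positive unit of $\OO_{k_n}$ is a square, which requires $\Gal(k/\Q)$ of exponent $p=2^t+1$ (so that $-1\equiv 2^t\pmod{p}$), exactly one ramified prime in $k_n/k$ (i.e., $2$ inert in $k$), and $h_k$ odd. Without it you cannot identify $\OO_{k_n}^{\times}\cap N_n(\ell_n^{\times})$ with $N_n(\OO_{\ell_n}^{\times})$, so the vanishing of $H^1(G,P_L)$ (equivalently, surjectivity of your $\psi$) is unproved, and the unit index entering any ambiguous-class computation is equally uncontrolled.

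The second gap is the count $c=|S|-1$. The paper's corresponding step is not a Chevalley formula at all: after killing $H^1(G,P_L)$ via Hughes--Mollin, it shows $|H^1(G,\OO_L^{\times})|=2$, generated by $-1$ (using Theorem \ref{greenberg} plus a lemma giving $4\OO_{F(\sqrt{-d})}\subseteq\OO_F+\sqrt{-d}\,\OO_F$, where $d>2$ is what rules out $-1\in V_n$), and then proves the surjection $H^2(G,P_L)\to H^1(G,\OO_L^{\times})$ is injective by establishing that $H^2(G,\OO_L^{\times})\to H^2(G,L^{\times})$ is onto, via a delicate finite-level Brauer-group and pigeonhole argument exploiting finite decomposition of places in the $\Z_2$-tower. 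Your proposed substitute --- Chevalley's ambiguous class number formula ``for the minus part at the infinite level'' --- faces exactly the obstruction you name (infinitely many archimedean places, infinite unit rank) and you supply no mechanism to overcome it; a finite-level genus computation followed by a limit is plausible (it is close in spirit to Ferrero's original method), but it again needs Hughes--Mollin for the unit index and a verification that the limit computes $A_L^G$. One further small gap: you state the criterion ``$2$ inert in $k$ iff $2^{p-1}\not\equiv 1\pmod{p^2}$'' but never verify it for Fermat primes; the paper does so via the identity $F_j-2=F_0F_1\cdots F_{j-1}$, which shows Fermat numbers are pairwise coprime, whence $p^2\nmid 2^{2^j}-1$ and the order of $2$ modulo $p^2$ is not a power of $2$.
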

\begin{rem}
Note that the field $k$ in Theorem \ref{main} is the first layer in the cyclotomic $\Z_p$-extension of $\Q$. Thus when $p=2$, Theorem \ref{main} precisely recovers Theorem \ref{Fer} since, in that case, $K = k_{\infty}^{\cyc} = \Q_{\infty}$ is the cyclotomic $\Z_2$-extension of $\Q$ and for each odd prime $q$ there are exactly $2^{\ord_2(q^2-1)-3}$ primes of $\Q_{\infty}^{\cyc}$ which lie above $q$.
\end{rem}
\begin{proof}[Proof of Theorem \ref{main}]
We first apply the general form of Iwasawa's `Riemann-Hurwtiz' formula (Equation \ref{equat}) to the extension $L/K$ of $\Z_2$-fields where we take $L=K(\sqrt{-d}) =\ell_{\infty}^{\cyc}$ to be the cyclotomic $\Z_2$-extension of $\ell = k(\sqrt{-d})$ and as above $G = \Gal(L/K)$. We get
\begin{align*}
\lambda_2(k(\sqrt{-d})) = \lambda_L = 2\lambda_K - \chi(G,P_L) + |S|.
\end{align*}
Thus it suffices to show that $\lambda_K = 0$, $|H^1(G, P_L)|= 1$, and $|H^2(G,P_L)|=2$.

First, we prove that $\lambda_K = 0$ using Theorem \ref{vanish}. We have assumed that $h_k$ is odd, and, in fact, this assumption is valid\footnote{This follows, e.g., from results of Humio Ichimura and Shoichi Nakajima; see Proposition 1 and its proof in Section 3 of \cite{Ichi}} for $p \in \{2, 3, 5, 17, 257\}$. Hence it is enough to show that $k$ has exactly one prime lying above $2$. When $p=2$, this is clear since $k = \Q(\sqrt{2})$, so we may assume $p = 2^t+1$ for some integer $t \geq 1$. Of course, we must have $t = 2^r$ (i.e, $p = F_r = 2^{2^r}+1$ is a prime Fermat number) for some integer $r \geq 0$. For every $j \geq 1$, we can factor
\begin{align*}
F_j - 2 = 2^{2^j}-1 = F_0F_1\cdots F_{j-1}
\end{align*}
as a product of consecutive Fermat numbers $F_i = 2^{2^i}+1$.  This identity shows that Fermat numbers are pairwise relatively prime, so $p^2 \nmid 2^{2^j}-1$ for any $j \geq 0$ since $p=F_r$ is a prime Fermat number. This means that the multiplicative order of $2$ modulo $p^2$ is not a power of $2$ which forces the residue degree of $2$ in $\Q(\zeta_{p^2}) = \Q(\zeta_{2p^2})$ to be divisible by $p$. Consequently, the residue degree of $2$ in $k$ is $p$, which is equivalent to $2$ being inert in $k$. Hence $\lambda_K=0$. 

It remains to show that $|H^1(G, P_L)|= 1$ and $|H^2(G,P_L)|=2$. First, we fix some notation. Write out the towers for the $\Z_2$-extensions $K/k$ and $L/\ell = K(\sqrt{-d})/k(\sqrt{-d})$ as
\begin{align*}
k &\subset k_1 \subset k_2 \subset \ldots \subset k_{\infty} := K \\
\ell &\subset \ell_1 \subset \ell_2 \subset \ldots \subset \ell_{\infty} := L.
\end{align*}
Note that we have dropped the superscripted cyclotomic ``$\cyc$'' notation for convenience. In this way,
\begin{align*}\Gal(k_n / k) \cong \Gal(\ell_n/\ell) \cong \Z/(2^n) \mbox{ for all positive integers } n \in \N
\end{align*}
and
\begin{align*}
G_n := \Gal(\ell_n/k_n) \cong \Z/(2) \mbox{ for all indices } n \in \N \cup \{\infty\}.
\end{align*}
Thus we have exact sequences
\begin{align*} 0 &\rightarrow H^1(G_n, P_{\ell_n}) \rightarrow H^2(G_n, \OO_{\ell_n}^{\times}) \rightarrow H^2(G_n,\ell_n^{\times}) \\
&\rightarrow H^2(G_n, P_{\ell_n}) \rightarrow H^1(G_n, \OO_{\ell_n}^{\times}) \rightarrow 0, \end{align*}
and we have norm maps $N_n \colon \ell_n \rightarrow k_n$ for all indices $n \in \N \cup \{\infty\}$. In this case, $N_n(\alpha) = \alpha\bar{\alpha} =|\alpha|^2$ is just the square modulus since the restriction of complex conjugation generates $G_n$ for all indices $n \in \N \cup \{\infty\}$. Thus the images of these norms maps consist entirely of totally positive\footnote{Recall that an algebraic number $\alpha$ is called totally positive if the images of $\alpha$ under every embedding $\Q(\alpha) \hookrightarrow \mathbb{C}$ are real and positive.} elements.

Now we need a theorem. It generalizes a result of Weber's (see S\"{a}tze 6 and 25 in \cite{Hass}) which Ferrero used in \cite{Ferr}.
\begin{thm}[I. Hughes and R. Mollin, \cite{Hugh}]\label{hughes}
Let $F'/F$ be a cyclic $2$-extension of real abelian number fields. Suppose $\Gal(F/\Q)$ has exponent $n$ such that $-1$ is congruent to a power of $2$ modulo $n$, and, if $F \neq F'$, suppose that exactly one prime ramifies in $F'/F$. If the class number $h_F$ of $F$ is odd, then every totally positive element of $\OO_{F'}^{\times}$ is a square in $\OO_{F'}^{\times}$.
\end{thm}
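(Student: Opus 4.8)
The plan is to recast the statement in terms of the archimedean signature map and then reduce everything to a statement about \emph{narrow} class numbers, which can be attacked by genus theory. For a totally real field $E$ write $\mathrm{sgn}\colon \OO_E^\times \to \{\pm 1\}^{r_1} \cong \F_2^{r_1}$ for the map recording the signs of a unit at the $r_1 = [E:\Q]$ real places, let $C_E^+$ be the narrow class group and $h_E^+$ its order. Since the only roots of unity in a totally real field are $\pm 1$, Dirichlet's theorem gives $\dim_{\F_2}\OO_E^\times/(\OO_E^\times)^2 = r_1$, and the kernel of $\mathrm{sgn}$ modulo squares is exactly the group of totally positive units modulo squares. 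The standard exact sequence
\begin{align*}
\OO_E^\times \xrightarrow{\ \mathrm{sgn}\ } \{\pm 1\}^{r_1} \longrightarrow C_E^+ \longrightarrow C_E \longrightarrow 0
\end{align*}
then shows that $h_E^+ = 2^{t}\,h_E$ with $t = \dim_{\F_2}\OO_E^{\times,+}/(\OO_E^\times)^2$. Hence ``every totally positive unit of $F'$ is a square'' is equivalent to $t = 0$, and a clean sufficient condition is that $h_{F'}^+$ be odd (for then $2^t \mid h_{F'}^+$ forces $t = 0$). So I would replace the theorem by the target: \emph{under the hypotheses, $h_{F'}^+$ is odd}, and I would prove this by separating the base case $F' = F$ from the inductive ascent of the $2$-extension $F'/F$.

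For the inductive step I would use genus theory. Because $h_F$ is odd there is no nontrivial abelian $2$-extension of $F$ unramified at all finite primes (such extensions are governed by the $2$-part of the wide class group, which is trivial, and $F'$ being totally real adds no ramification at infinity); therefore the unique ramified prime of $F'/F$ must be \emph{totally} ramified, so in each quadratic layer of a tower $F = F_0 \subset F_1 \subset \cdots \subset F_s = F'$ exactly one finite prime ramifies and no infinite prime does. Applying the narrow form of Chevalley's ambiguous class number formula to each layer $F_i/F_{i-1}$ with $G = \Gal(F_i/F_{i-1})$,
\begin{align*}
\bigl|(C_{F_i}^+)^{G}\bigr| = \frac{h_{F_{i-1}}^+\cdot \prod_v e_v}{[F_i:F_{i-1}]\,\bigl[\OO_{F_{i-1}}^{\times,+} : \OO_{F_{i-1}}^{\times,+}\cap N_{F_i/F_{i-1}}F_i^\times\bigr]},
\end{align*}
I have $\prod_v e_v = 2$ and $[F_i:F_{i-1}] = 2$; the unit index $u$ in the denominator is a power of $2$ (as $N F_i^\times \supseteq (F_{i-1}^\times)^2$), so if $h_{F_{i-1}}^+$ is odd then integrality forces $u = 1$ and $|(C_{F_i}^+)^G| = h_{F_{i-1}}^+$ is odd. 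Since a nontrivial finite $2$-group acted on by the $2$-group $G$ has nontrivial fixed points, the $2$-Sylow subgroup of $C_{F_i}^+$ must then be trivial, i.e.\ $h_{F_i}^+$ is odd. Induction yields $h_{F'}^+$ odd once the base level is settled.

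It remains to treat the case $F' = F$: if $\Delta := \Gal(F/\Q)$ has exponent $n$ with $-1 \equiv 2^k \pmod n$ and $h_F$ is odd, then $h_F^+$ is odd, equivalently $\mathrm{sgn}$ is surjective. I would read $\mathrm{sgn}$ as a map of $\F_2[\Delta]$-modules: $\Delta$ permutes the $r_1 = |\Delta|$ real places simply transitively, so the target is the regular representation $\F_2[\Delta]$, while $\OO_F^\times/(\OO_F^\times)^2$ is abstractly also $\cong \F_2[\Delta]$. Since oddness of $h_F$ kills the $2$-part of the wide class group, $\mathrm{coker}(\mathrm{sgn})$ is the $2$-part of $C_F^+$, which is generated by square roots $\sqrt{\varepsilon}$ of totally positive units $\varepsilon$ for which $F(\sqrt{\varepsilon})/F$ is unramified at the primes above $2$ (ramification at odd primes is excluded because $\varepsilon$ is a unit, and at infinity because $\varepsilon \gg 0$). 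The entire difficulty is thus concentrated at $2$, and I must show that $-1 \equiv 2^k \pmod n$—i.e.\ that complex conjugation $(=-1)$ lies in the subgroup generated by the Frobenius $(=2)$ in $(\Z/n)^\times$—pins down the decomposition of $2$ and the $\F_2[\Delta]$-structure of the local unit contributions there finely enough to obstruct every such extension, forcing $\mathrm{coker}(\mathrm{sgn}) = 0$. This is precisely the arithmetic generalizing Weber's computation (Sätze 6 and 25 in \cite{Hass}), and I expect this base case—controlling local squares at $2$ through the Frobenius condition—to be the main obstacle; by comparison the signature reformulation and the genus-theoretic induction are routine.
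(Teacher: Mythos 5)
Your signature-map reduction and your inductive ascent are both sound: the identity $h_{F'}^+ = 2^t h_{F'}$ with $t = \dim_{\F_2}\OO_{F'}^{\times,+}/(\OO_{F'}^\times)^2$ is correct, the observation that the single ramified prime must be totally ramified (its inertia field being a real, everywhere-unramified $2$-extension of $F$, impossible since $h_F$ is odd) is correct, and the narrow ambiguous class number formula applied layer by layer does propagate oddness of $h^+$ up the tower exactly as you say. For what it is worth, the paper itself offers no proof to compare against --- it quotes this theorem verbatim from Hughes and Mollin \cite{Hugh} as an external input --- so your attempt must stand on its own.

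And as a standalone proof it has a genuine gap: the base case $F' = F$, which you explicitly defer (``I must show that \dots,'' ``I expect this base case \dots\ to be the main obstacle''), is not a routine remainder but \emph{is} the Hughes--Mollin theorem --- the generalization of Weber's S\"atze 6 and 25 that the whole statement exists to provide. Your hypothesis $-1 \equiv 2^k \pmod{n}$ is never actually used anywhere in the completed portions of your argument; the reduction and the ascent would go through verbatim with that hypothesis deleted, which is a reliable sign that the arithmetic core is missing. Your sketch of that core also has the genus theory slightly inverted: the $2$-part of $C_F^+$ is not ``generated by square roots of totally positive units''; rather, by Kummer theory each nonsquare totally positive unit $\varepsilon$ yields a real quadratic extension $F(\sqrt{\varepsilon})/F$ unramified outside $2$, and since $h_F$ is odd such an extension must ramify at some prime above $2$, i.e.\ $\varepsilon$ must fail to be a local square at $2$. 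Proving that failure for \emph{every} nonsquare totally positive unit is precisely where the condition that $-1$ lies in the subgroup of $(\Z/n\Z)^\times$ generated by $2$ (equivalently, a constraint tying complex conjugation to the Frobenius at $2$ in $\Gal(F/\Q)$) must be brought to bear on the $\F_2[\Gal(F/\Q)]$-module structure of the units and the local units at $2$, and no such argument is given. Until that step is supplied, you have reduced the theorem to its hardest special case, not proved it.
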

For all $n \in \N$ we apply Theorem \ref{hughes} to the extension $F'/F = k_n/k$ to get
\begin{align}\label{n}\OO_{k_n}^{\times} \cap N_{\infty}(L^{\times}) = (\OO_{k_n}^{\times})^2 = N_n(\OO_{\ell_n}^{\times}) \end{align}
where $(\OO_{k_n}^{\times})^2$ denotes the subgroup of squares of units. In fact, taking unions shows that Equation \ref{n} also holds for $n = \infty$, so
\begin{align*}H^1(G,P_L) \cong \ker(\OO_K^{\times}/(\OO_K^{\times})^2 \rightarrow K^{\times}/N_{\infty}(L^{\times})) = (\OO_K^{\times} \cap N_{\infty}(L^{\times}))/(\OO_K^{\times})^2 = 0,  \end{align*}
and likewise $H^1(G_n, P_{\ell_n}) = 0$ for all $n\in \N$.

Hence it remains only to show that $|H^2(G,P_L)|=2$. To do this, we first prove $|H^1(G,\OO_L^{\times})| = 2$ and then show that the surjection $H^2(G,P_L) \rightarrow H^1(G,\OO_L^{\times})$ is also an injection.

For each $n\in \N$, let $t_{\infty}(n)$ denote the number of infinite places of $k_n$ which ramify in $\ell_n/k_n$. Since $k_n$ is totally real and $\ell_n$ is totally complex, $t_{\infty}(n) = [k_n:\Q]$ is just the number of real places of $k_n$, so Dirichlet's unit theorem gives
\begin{align*}
\OO_{k_n}^{\times}\cong \Z^{t_{\infty}(n)+0-1} \oplus \frac{\Z}{(2)}
\end{align*}
as abelian groups. Then using Theorem \ref{hughes} again shows
\begin{align}\label{unit}
|H^2(G_n, \OO_{\ell_n}^{\times})| = \left| \frac{\OO_{k_n}^{\times}}{(\OO_{k_n}^{\times})^2} \right| = \left| \frac{ \Z^{t_{\infty}(n)-1} \oplus (\Z/(2))}{2(\Z^{t_{\infty}(n)-1} \oplus (\Z/(2)))} \right| = 2^{t_{\infty}(n)-1+1} = 2^{t_{\infty}(n)}.
\end{align}
Now we state another needed result, whose proof may be found, for example, in \cite{Gree}.
\begin{thm}\label{greenberg}
Suppose $F'/F$ is a quadratic extension of number fields and let $t_{\infty}$ denote the number of infinite places of $F$ which ramify in $F'$. Then
\begin{align*}\chi(\Gal(F'/F), \OO_{F'}^{\times}) = t_{\infty} - 1.\end{align*}
\end{thm}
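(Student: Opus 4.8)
The plan is to reduce the Herbrand quotient of the full unit group to that of a permutation lattice on the archimedean places, exploiting that the ``Euler characteristic'' $\chi$ vanishes on finite modules, is additive on short exact sequences, and is therefore insensitive to passing between commensurable lattices. Fix a generator $\sigma$ of $G = \Gal(F'/F) \cong \Z/(2)$ and let $\Sigma$ be the set of archimedean places of $F'$, on which $G$ acts by $w \mapsto \sigma w$. First I would introduce the $G$-equivariant logarithmic embedding
\[
\lambda \colon \OO_{F'}^{\times} \to \R^{\Sigma}, \qquad u \mapsto (\log\|u\|_w)_{w \in \Sigma},
\]
whose kernel is the finite group $\mu_{F'}$ of roots of unity and whose image $\Lambda \mathrel{\mathop :}= \lambda(\OO_{F'}^{\times})$ is, by Dirichlet's unit theorem, a full $G$-stable lattice in the trace-zero hyperplane $H = \{(x_w) : \sum_w x_w = 0\}$. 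Since $G$ and all modules in sight are finitely generated, the relevant cohomology is finite and $\chi$ is defined; applying additivity to $0 \to \mu_{F'} \to \OO_{F'}^{\times} \to \Lambda \to 0$ and using $\chi(G,\mu_{F'}) = 0$ (property (2), as $\mu_{F'}$ is finite) gives $\chi(G,\OO_{F'}^{\times}) = \chi(G,\Lambda)$.

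Next I would replace $\Lambda$ by a transparent lattice in the same rational representation. As $G$-modules over $\Q$ we have $\Lambda \otimes \Q \cong H_{\Q} = \ker(\Q[\Sigma] \to \Q)$, the augmentation submodule of the permutation module. Hence $\Lambda$ is commensurable with the integral lattice $Y_0 \mathrel{\mathop :}= \ker(\Z[\Sigma] \to \Z)$: choosing a $\Q G$-isomorphism and clearing denominators produces a $G$-map $\Lambda \to Y_0$ with finite kernel and finite cokernel, and splitting this map into its kernel/image/cokernel sequences together with $\chi(\mathrm{finite}) = 0$ forces $\chi(G,\Lambda) = \chi(G,Y_0)$. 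Now additivity applied to $0 \to Y_0 \to \Z[\Sigma] \to \Z \to 0$ yields
\[
\chi(G,Y_0) = \chi(G,\Z[\Sigma]) - \chi(G,\Z),
\]
and a direct Herbrand-quotient computation for the trivial module ($N = 1+\sigma$ acts as multiplication by $2$, so $H^2(G,\Z) = \Z/2\Z$ and $H^1(G,\Z) = 0$) gives $\chi(G,\Z) = 1$, consistent with Table \ref{tab0}.

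It remains to compute $\chi(G,\Z[\Sigma])$ by decomposing the permutation module into $G$-orbits, which correspond bijectively to the archimedean places $v$ of $F$. If $v$ is ramified in $F'$ — necessarily a real place of $F$ becoming complex in $F'$ — the unique place above it is $G$-fixed, so its orbit summand is the trivial lattice $\Z$ with $\chi(G,\Z) = 1$; if $v$ is unramified, the two places above it are interchanged by $\sigma$, so its orbit summand is the free lattice $\Z[G]$, which is cohomologically trivial and has $\chi(G,\Z[G]) = 0$ (again matching Table \ref{tab0}). Summing over orbits and using that there are exactly $t_{\infty}$ ramified archimedean places gives $\chi(G,\Z[\Sigma]) = t_{\infty}$, whence $\chi(G,\OO_{F'}^{\times}) = \chi(G,Y_0) = t_{\infty} - 1$.

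The main obstacle I anticipate is cleanly justifying commensurability invariance of $\chi$, i.e.\ that a $G$-map of lattices with finite kernel and cokernel preserves $\chi$; this is what legitimizes the reduction from the analytically defined lattice $\Lambda$ to the purely combinatorial permutation lattice, and it rests on additivity plus the vanishing of $\chi$ on finite modules. The remaining ingredients — the values $\chi(G,\Z) = 1$ and $\chi(G,\Z[G]) = 0$ for $G \cong \Z/(2)$, the $G$-equivariance of the logarithmic embedding, and the identification of ramified archimedean places with real-to-complex places — are routine, the last being essentially the definition of $t_{\infty}$.
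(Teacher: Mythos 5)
Your proof is correct, but there is nothing in the paper to compare it against: the paper states Theorem \ref{greenberg} without proof, deferring to the reference \cite{Gree}. Your argument is the standard one found in class field theory texts for computing the Herbrand quotient of the unit group in a cyclic extension (in the quadratic case, $|H^2|/|H^1| = \frac{1}{2}\prod_{v\mid\infty}[F'_w:F_v] = 2^{t_{\infty}-1}$): kill the roots of unity via the logarithmic embedding, identify the unit lattice rationally with the augmentation sublattice of the permutation module $\Z[\Sigma]$, and decompose $\Z[\Sigma]$ into orbits, where ramified archimedean places contribute trivial summands $\Z$ (with $\chi(G,\Z)=1$) and unramified ones contribute cohomologically trivial summands $\Z[G]$. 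All the individual steps check out, including the commensurability invariance of $\chi$ that you flag, which is indeed immediate from additivity applied to the kernel/image and image/cokernel sequences together with the vanishing of $\chi$ on finite modules. The one step you assert without justification is the isomorphism $\Lambda \otimes \Q \cong \ker(\Q[\Sigma] \to \Q)$ of $\Q G$-modules: Dirichlet's theorem only gives you that $\Lambda$ spans the trace-zero hyperplane over $\R$, i.e.\ an isomorphism $\Lambda \otimes \R \cong H_{\R}$ of $\R G$-modules, and descending this to $\Q$ requires a (standard) argument --- e.g.\ that finite-group representations over a field of characteristic zero are determined by their characters, or the Noether--Deuring theorem. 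This is routine and does not affect correctness, but it is the lemma that actually legitimizes the reduction, more so than the commensurability point you single out; many textbook treatments package both steps into a single lemma stating that two $G$-stable full lattices in the same real representation have equal Herbrand quotients.
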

We now apply Theorem \ref{greenberg} for the extension $F'/F = \ell_n/k_n$ to conclude that
\begin{align*}
|H^1(G_n, \OO_{\ell_n}^{\times})| = 2
\end{align*}
for all positive integers $n \in \N$ since we have already shown in Equation \ref{unit} above that $|H^2(G_n, \OO_{\ell_n}^{\times})| = 2^{t_{\infty}(n)}$. On the other hand, for $n \in \N \cup \{\infty\}$ we have
\begin{align*} U_n/V_n \cong  H^1(G_n, \OO_{\ell_n}^{\times}) \end{align*}
where $U_n$ is the norm 1 units in $\OO_{\ell_n}$ and $V_n = \{\bar{u}/u: u \in \OO_{\ell_n}^{\times}\}$. We claim $U_n/V_n$ is generated by the coset of $-1$ for all indices $n \in \N \cup \{\infty\}$. We need the following lemma which we do not prove here but is not hard to establish (see, for example, Lemma 6.15 in \cite{Sche}).
\begin{lemma}
Let $d > 1$ be a squarefree integer. Suppose $F$ is a number field with discriminant $\Delta_F$ such that $(d, \Delta_F)\leq 2$. Then $4\OO_{F(\sqrt{-d})} \subseteq \OO_F + \sqrt{-d}\OO_F$.
\end{lemma}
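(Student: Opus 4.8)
The plan is to work with an arbitrary integer $\alpha \in \OO_{F(\sqrt{-d})}$ and to pin down the denominators of its coordinates with respect to the $F$-basis $\{1, \sqrt{-d}\}$ of $F(\sqrt{-d})$. Writing $E = F(\sqrt{-d})$ and $\alpha = a + b\sqrt{-d}$ with $a, b \in F$, the containment $4\OO_E \subseteq \OO_F + \sqrt{-d}\,\OO_F$ is equivalent to the two scalar statements $4a \in \OO_F$ and $4b \in \OO_F$. Since $\OO_F$ is integrally closed in $F$ and $\alpha$ is integral over $\OO_F$, the coefficients of its characteristic polynomial $x^2 - \mathrm{Tr}_{E/F}(\alpha)\,x + \mathrm{N}_{E/F}(\alpha)$ lie in $\OO_F$; explicitly $\mathrm{Tr}_{E/F}(\alpha) = 2a \in \OO_F$ and $\mathrm{N}_{E/F}(\alpha) = a^2 + d b^2 \in \OO_F$. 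The trace relation gives $4a \in \OO_F$ for free, and eliminating $a$ via $4 d b^2 = 4(a^2 + d b^2) - (2a)^2$ shows $4 d b^2 \in \OO_F$. Thus the entire lemma reduces to deducing $4b \in \OO_F$ from the single integral relation $4 d b^2 \in \OO_F$.

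For this last step I would argue one prime at a time: it suffices to check $v_{\p}(4b) \ge 0$ for every maximal ideal $\p$ of $\OO_F$, where $v_\p$ is the normalized valuation. Suppose $v_\p(b) = -m$ with $m > 0$. Applying $v_\p$ to $4 d b^2 \in \OO_F$ gives $v_\p(4) + v_\p(d) \ge 2m$. If $\p$ lies over an odd prime $p$ with $p \nmid d$, then $v_\p(4) = v_\p(d) = 0$, forcing $m \le 0$, a contradiction, so $v_\p(b) \ge 0$ there. The two remaining families of primes --- those over $2$, and those over an odd prime dividing $d$ --- are exactly where the hypotheses enter, and I would dispatch them separately.

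The genuine content sits at the odd primes $\p$ dividing $d$. Here squarefreeness of $d$ forces $v_p(d) = 1$ for the rational prime $p$ below $\p$, so that $v_\p(d) = e(\p \mid p)$. The hypothesis $(d, \Delta_F) \le 2$ guarantees that such an odd $p$ does not divide $\Delta_F$, hence $p$ is unramified in $F/\Q$ and $e(\p \mid p) = 1$; therefore $v_\p(d) = 1$ and the inequality $2m \le v_\p(4) + v_\p(d) = 1$ forces $m = 0$. This is precisely the step that would fail without the coprimality assumption: an odd prime that both ramifies in $F$ and divides $d$ could give $v_\p(d) \ge 2$ and produce a pole of $b$ that the factor $4$ cannot clear. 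At the primes $\p$ over $2$ one has $v_\p(4) = 2e(\p\mid 2)$, while $v_\p(d) \le e(\p\mid 2)$ regardless of whether $2 \mid d$ (again by squarefreeness); feeding these bounds into $2m \le v_\p(4) + v_\p(d)$ and computing $v_\p(4b) = 2e(\p\mid 2) + v_\p(b) \ge e(\p \mid 2)/2 \ge 0$ shows that the factor $4$ is exactly what is needed to absorb the wild contribution at $2$. Assembling the inequalities over all $\p$ yields $4b \in \OO_F$, and together with $4a \in \OO_F$ this gives $4\alpha \in \OO_F + \sqrt{-d}\,\OO_F$. I expect the arithmetic at the primes over $2$ to be the most delicate bookkeeping, but the real obstacle --- and the only place where the shape of the hypothesis matters --- is controlling the odd primes dividing $d$ via the unramifiedness extracted from $(d,\Delta_F)\le 2$.
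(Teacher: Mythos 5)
Your proof is correct and complete: the trace/norm reduction yields $2a \in \OO_F$ and $4db^2 = 4(a^2+db^2)-(2a)^2 \in \OO_F$, and your prime-by-prime valuation analysis correctly isolates where each hypothesis enters --- squarefreeness together with $(d,\Delta_F)\le 2$ forces $v_{\p}(d)=1$ at odd primes $\p \mid d$ (via Dedekind's criterion that $p \mid \Delta_F$ exactly when $p$ ramifies), while at primes over $2$ the worst case $v_{\p}(4)+v_{\p}(d)\le 3e(\p\mid 2)$ still leaves $v_{\p}(4b)\ge e(\p\mid 2)/2 \ge 0$. There is no in-paper argument to compare against: the paper explicitly omits the proof and defers to Lemma 6.15 of \cite{Sche}, and your trace/norm-plus-local-valuations argument is precisely the natural elementary proof that citation stands in for. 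One cosmetic caveat: writing $\alpha = a+b\sqrt{-d}$ with unique $a,b\in F$ tacitly assumes $\sqrt{-d}\notin F$; under the stated hypotheses this can fail only when $d=2$, in which case the asserted containment is trivial because $4\OO_{F(\sqrt{-d})}=4\OO_F\subseteq \OO_F \subseteq \OO_F+\sqrt{-d}\,\OO_F$.
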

Suppose contrary to the claim that $-1 = \bar{u}/u \in V_n$ for some $u \in \OO_{\ell_n}^{\times}$ and some positive integer $n \in \N$. Then both $u$ and $u^{-1}$ are in $\OO_{\ell_n}$, so since $u = -\bar{u}$ and $u^{-1} = - \overline{u^{-1}}$ the lemma implies
\begin{align*}u = \frac{a\sqrt{-d}}{4} \mbox{ and } u^{-1} = \frac{b\sqrt{-d}}{4} \end{align*}
for some $a, b \in \OO_{k_n}$. Hence $abd = - 4^2$, so $d$ divides $4^2=2^4$ in $\OO_{k_n}$, but that means $d$ divides $2^{4}$ in $\Z$. Therefore $d=1$ or $d=2$ since $d$ is a squarefree positive integer, which contradicts our assumption that $d>2$. Thus for each $n \in \N$ we know that $-1 \notin V_n$ and $|U_n/V_n| = 2$, so $U_n/V_n$ is generated by the coset of $-1$. It follows that $H^1(G, \OO_L^{\times}) \cong U_{\infty}/V_{\infty}$ is also generated by the coset of $-1$ and has order 2 since
\begin{align*}U_{\infty} = \bigcup_{n\in \N} U_n = \bigcup_{n\in \N} \langle-1\rangle V_n = \langle -1 \rangle V_{\infty} \mbox{ and }-1 \notin \bigcup_{n \in \N} V_n = V_{\infty}.\end{align*}
To summarize, we have an exact sequence
\begin{align*}
0 \rightarrow H^2(G,\OO_L^{\times}) \rightarrow H^2(G,L^{\times}) \rightarrow H^2(G, P_L) \rightarrow H^1(G, \OO_L^{\times}) \rightarrow 0
\end{align*}
where $|H^1(G, \OO_L^{\times})| = 2$, so to prove $|H^2(G,P_L)|=2$ (and thus finish the proof of Theorem \ref{main}) we only need to show that the map $H^2(G,\OO_L^{\times}) \rightarrow H^2(G,L^{\times})$ is onto since that would imply the map $H^2(G,L^{\times}) \rightarrow H^2(G, P_L)$ is trivial and, consequently, that the map $H^2(G, P_L) \rightarrow H^1(G, \OO_L^{\times})$ is bijective.

We have a commutative square
\begin{align*}
\xymatrix{
H^2(G, \OO_L^{\times}) \ar[r] \ar[d]^-{\mbox{\rotatebox{90}{$\sim$}}} & H^2(G, L^{\times}) \ar[d]^-{\mbox{\rotatebox{90}{$\sim$}}} \\
 \displaystyle{\varinjlim_n} \, \frac{\OO_{k_n}^{\times}}{N_n(\OO_{\ell_n}^{\times})} \ar[r] & \displaystyle{\varinjlim_n} \, \frac{k_n^{\times}}{N_n(\ell_n^{\times})}
}
\end{align*}
where the horizontal maps are the natural maps and the vertical maps are isomorphisms. Pick some $n \in \N$ and $x_n \in k_n^{\times}$. Then the commutative square above implies that proving the surjectivity of $H^2(G,\OO_L^{\times}) \rightarrow H^2(G,L^{\times})$ amounts to showing there is an $m \in \N$ and a $y_m \in \OO_{k_m}^{\times}$ such that we have an equality of cosets $x_nN_j(\ell_j^{\times})= y_mN_j(\ell_j^{\times})$ for some integer $j\geq \max\{n,m\}$. In fact, we show that we can take $j=m+1$.

For the moment, let $m$ be an arbitrary positive integer. Define $S_m$ to be the set of places of $k_m$ which do not split in $\ell_m$. We have a commutative diagram
\begin{align}\label{diagram}
\xymatrixcolsep{4pc}\xymatrix{
\displaystyle{\frac{\OO_{k_{m+1}}^{\times}}{N_{m+1}(\OO_{\ell_{m+1}}^{\times})}} \ar[r]^{\beta_{m+1}} & \displaystyle{\frac{k_{m+1}^{\times}}{N_{m+1}(\ell_{m+1}^{\times})}} \ar[r]^-{\sim} & \displaystyle{\bigoplus_{v'\in S_{m+1}}}\hspace{-0.1 in}' \, \Br(\ell_{m+1,w'}/k_{m+1,v'}) \\
\displaystyle{\frac{\OO_{k_m}^{\times}}{N_m(\OO_{\ell_m}^{\times})}} \ar[r]^{\beta_m} \ar[u]^{\alpha_m} & \displaystyle{\frac{k_m^{\times}}{N_m(\ell_m^{\times})}}  \ar[u]^{\gamma_m} \ar[r]^-{\sim} &  \displaystyle{\bigoplus_{v \in S_m}}\hspace{-0.03 in}'  \, \Br(\ell_{m,w}/k_{m,v}) \ar[u]^{\delta_m} 
}
\end{align}
where $\alpha, \beta, \delta$ are the natural maps and the restricted direct sum $\oplus'$ on relative local Brauer groups contains only those tuples which lie in the kernel of the natural map to $\Q/\Z$ given by the sum of local invariants.\footnote{Note that $v\in S_m$ uniquely specifies a place $w$ of $\ell_m$ which lies above $v$. Also, if $v\notin S_m$ is a place of $k_m$, then $\Br(\ell_{m,w}/k_{m,v})$ is trivial for any place $w$ of $\ell_m$ lying over $v$.} We have already noted that $\beta_m, \beta_{m+1}$ are injective, and $\alpha_m$ is injective for the same reason, i.e., Theorem \ref{hughes} implies \begin{align*}
\OO_{k_m}^{\times} \cap N_{m+1}(\OO_{\ell_{m+1}}) =( \OO_{k_m}^{\times})^2 = N_m(\OO_{\ell_m}^{\times}).
\end{align*}
Thus $\beta_{m+1} \circ \alpha_m$ is injective, so commutativity of the left side of Diagram \ref{diagram} and Equation \ref{unit} imply
\begin{align}\label{imply}
|\im(\gamma_m \circ \beta_m)| = |\im(\beta_{m+1} \circ \alpha_m)| = |\OO_{k_m}^{\times}/N_m(\OO_{\ell_m}^{\times})| = 2^{t_{\infty}(m)}.
\end{align}
Let $H_m$ be the subgroup consisting of those tuples in $\bigoplus_{v \in S_m}' \, \Br(\ell_{m,w}/k_{m,v})$ which are trivial on the components corresponding to finite places $v \in S_m$ which split in $k_{m+1}$.
We claim that the image of $H_m$ under $\delta_m$ also has size $2^{t_{\infty}(m)}$. First, note that for each $v\in S_m$ the relative local Brauer group $\Br(\ell_{m,w}/k_{m,v})$ has order two since
\begin{align*}
\Br(\ell_{m,w}/k_{m,v}) \cong \left\{ \begin{array}{ll} \ker(\Br(\R) \longrightarrow \Br(\C)=0) = \Br(\R) \cong \Z/(2) & \mbox{if $v$ is infinite} \\ \ker(\Q/\Z \stackrel{\times 2}{\longrightarrow} \Q/\Z) = \frac{1}{2}\Z/\Z \cong \Z/(2) & \mbox{if $v$ is finite.}\end{array} \right.
\end{align*}
If $v \in S_m$ is a finite place which does not split in $k_{m+1}$, then the map $\delta_m$ kills the component of $\Br(\ell_{m,w}/k_{m,v})$ since the local degree is $[k_{m+1}:k_m] = 2$. Also, the infinite places in $k_{\infty}/k$ are totally split, so if $v \in S_m$ is an infinite place, then there are exactly two (real) places $v_1', v_2'$ of $k_{m+1}$ which lie above $v$; in this case, the map $\delta_m$ on the component $\Br(\ell_{m,w}/k_{m,v})$ is given by
\begin{align*}
\Br(\ell_{m,w}/k_{m,v}) \rightarrow \Br(\ell_{m+1,w_1'}/k_{m+1,v_1'}) \oplus \Br(\ell_{m+1,w_2'}/k_{m+1,v_2'}) \colon \alpha \mapsto (\alpha, \alpha).
\end{align*}
Therefore $|\delta_m(H_m)| =2^{t_{\infty}(m)}$ as claimed since, in particular, $H_m$ contains all tuples whose only nonzero components correspond to infinite places and, if needed to make the sum of local invariants zero, exactly one finite place that does not split in $k_{m+1}$.

Now we specify $m$. Finite places are finitely split in the cyclotomic $\Z_p$-extension of a number field, so there is a sufficiently large positive integer $m\geq n$ such that every finite place $v\in S_m$ with $\ord_v(x_n)\neq 0$ does not split in $k_{m+1}$. Increasing $m$ if necessary, we may also assume that the finite places $v\in S_m$ which ramify in $\ell_m/k_m$ do not split in $k_{m+1}$. For finite places $v\in S_m$ which are unramified in $\ell_m/k_m$ with $\ord_v(x_n) = 0$, we have $x_n \in \OO_{k_{m,v}}^{\times}$, but $N_m(\OO_{\ell_{m,w}}^{\times}) = \OO_{k_{m,v}}^{\times}$ (see Theorem 2 in Chapter 31 of \cite{Lore}), so the image of $x_nN_m(\ell_m^{\times})$ in $\bigoplus_{v \in S_m}' \, \Br(\ell_{m,w}/k_{m,v}) \cong \bigoplus_{v \in S_m}' \, k_{m,v}^{\times}/N_m(\ell_{m,w}^{\times})$ is contained in $H_m$. Equivalently,
\begin{align}\label{in}x_nN_m(\ell_m^{\times}) \in \tilde{H}_m\end{align}
where $\tilde{H}_m$ denotes the subgroup of $k_m^{\times}/N_m(\ell_m^{\times})$ which maps isomorphically onto $H_m$ in the above commutative diagram \ref{diagram}. We have $|\gamma_m(\tilde{H}_m)| = |\delta_m(H_m)| =2^{t_{\infty}(m)}$ and we will show that $\im(\gamma_m \circ \beta_m) \subseteq \gamma_m(\tilde{H}_m)$, so Equation \ref{imply} implies via the pigeonhole principle that
\begin{align}\label{last}
\gamma_m(\tilde{H}_m)=\im(\gamma_m \circ \beta_m).
\end{align}
To see $\im(\gamma_m \circ \beta_m) \subseteq \gamma_m(\tilde{H}_m)$, we first note that the study of Brauer groups for local fields shows $\im(\beta_m)$ maps injectively into the subgroup of $\bigoplus_{v \in S_m}' \, \Br(\ell_{m,w}/k_{m,v})$ consisting of tuples which are trivial on all components other than those corresponding to infinite places and to finite places which ramify in $\ell_m/k_m$. Since the finite places $v\in S_m$ which ramify in $\ell_m/k_m$ are non-split in $k_{m+1}$ we know that $\im(\beta_m) \subseteq \tilde{H}_m$, and hence $\im(\gamma_m \circ \beta_m) \subseteq \gamma_m(\tilde{H}_m)$, as needed. Finally, Equations \ref{in} and \ref{last} give
\begin{align*}
 x_n N_{m+1}(\ell_{m+1}^{\times})=\gamma_m(x_nN_m(\ell_m^{\times})) = \gamma_m(\beta_m(y_mN_m(\OO_{\ell_m}^{\times}))) = y_mN_{m+1}(\ell_{m+1}^{\times})
\end{align*}
for some $y_m \in \OO_{k_m}^{\times}$, which finishes the proof.
\end{proof}

\bibliographystyle{amsalpha}
\bibliography{../References}

\vfill

\end{document}